\DeclareMathOperator{\AdTrip}{AdTrip}
\newcommand{\cart}{\mathrm{cart}}
\newcommand{\Cocart}{\mathrm{Cocart}}
\newcommand{\bCocart}{\mathbf{Cocart}}
\newcommand{\bCart}{\mathbf{Cart}}
\newcommand{\Str}{\mathrm{Str}}
\newcommand{\Un}{\mathrm{Un}}
\mathchardef\dash="2D
\newcommand{\bCat}{\mathbf{Cat}}
\newcommand{\bSpan}{\mathbf{Span}}
\newcommand{\bFun}{\mathbf{Fun}}
\newcommand{\co}{\mathrm{co}}
\newcommand{\coop}{\mathrm{co}\,\mathrm{op}}
\DeclareMathOperator{\BC}{BC}
\newcommand{\ladj}{\textup{-ladj}}
\newcommand{\radj}{\textup{-radj}}
\newcommand{\cc}{\textup{cc}}
\newcommand{\ct}{\textup{ct}}
\newcommand{\Unf}{\mathrm{Unf}}
\newcommand{\bUnf}{\mathbf{Unf}}
\newcommand{\cov}{\textup{cov}}
\newcommand{\contra}{\textup{con}}
\DeclareMathOperator{\Hom}{Hom}
\DeclareMathOperator{\Fun}{Fun}
\DeclareMathOperator{\id}{id}
\DeclareMathOperator{\pr}{pr}
\let\lim\relax
\DeclareMathOperator{\lim}{lim}
\DeclareMathOperator{\PSh}{PSh}
\DeclareMathOperator{\Span}{Span}
\newcommand{\catop}{^{\textup{op}}}
\newcommand{\catco}{^{\textup{co}}}
\newcommand{\catname}[1]{{\textup{#1}}}
\newcommand{\Cat}{\catname{Cat}}
\newcommand{\qquadtext}[1]{\qquad\textrm{#1}\qquad}
\let\op\relax
\DeclareMathOperator{\op}{op}
\renewcommand{\phi}{\varphi}
\renewcommand{\epsilon}{\varepsilon}
\newcommand{\Cc}{\mathcal{C}}
\newcommand{\Dd}{\mathcal{D}}
\newcommand{\Ee}{\mathcal{E}}
\theoremstyle{plain}
\newtheorem*{mainthm}{Main Theorem}
\newtheorem{theorem}{Theorem}[section]
\newtheorem{corollary}[theorem]{Corollary}
\newtheorem{lemma}[theorem]{Lemma}
\newtheorem{proposition}[theorem]{Proposition}
\newtheorem*{theorem*}{Theorem}
\newtheorem*{corollary*}{Corollary}
\theoremstyle{definition}
\newtheorem{construction}[theorem]{Construction}
\newtheorem{definition}[theorem]{Definition}
\newtheorem{example}[theorem]{Example}
\newtheorem{notation}[theorem]{Notation}
\newtheorem{observation}[theorem]{Observation}
\newtheorem{remark}[theorem]{Remark}
\newtheorem*{remark*}{Remark}
\newcommand{\iso}{\xrightarrow{\;\smash{\raisebox{-0.5ex}{\ensuremath{\scriptstyle\sim}}}\;}}
\title[Universality of Barwick's unfurling construction]{Universality of \\Barwick's unfurling construction}
\author{Bastiaan Cnossen}
\address{B.C.: Fakultät für Mathematik, Universität Regensburg, 93040 Regensburg, Germany}
\author{Tobias Lenz}
\address{T.L.: Mathematisches Institut, Rheinische Friedrich-Wilhelms-Universität Bonn, Endenicher Allee 60, 53115 Bonn, Germany}
\author{Maxime Ramzi}
\address{M.R.: FB Mathematik und Informatik, Universität Münster, Einsteinstraße 62, 48149 Münster, Germany }
\begin{document}
\begin{abstract}
	Given an $\infty$-category $\Cc$ with pullbacks, its $(\infty,2)$-category $\bSpan(\Cc)$ of spans has the universal property of freely adding right adjoints to morphisms in $\Cc$ satisfying a Beck--Chevalley condition. We show that this universal property is implemented by an $(\infty,2)$-categorical refinement of Barwick's \emph{unfurling construction}: For any right adjointable functor $\Cc \to \Cat_{\infty}$, the unstraightening of its unique extension to $\bSpan(\Cc)$ can be explicitly written down as another span $(\infty,2)$-category, and on underlying $(\infty,1)$-categories this recovers Barwick's construction.

	As an application, we show that the constructions of cartesian normed structures by Nardin--Shah and Cnossen--Haugseng--Lenz--Linskens coincide.
\end{abstract}
\maketitle

\section{Introduction}

In recent years, \textit{span categories}, also known as \emph{categories of correspondences}, have been widely used across various areas of pure mathematics to encode assignments that admit both covariant and contravariant functoriality that interact via a Beck--Chevalley property. In particular, they appear in the study of topological field theories \cites{Lurie2009ClassificationTFTs,HaugsengSpans}, motivic homotopy theory \cites{Voevodsky_Correspondence, CisinskiDeglise2019Triangulated,EHKSY_Algebraic_Cobordism}, equivariant homotopy theory \cites{nardin2016exposeIV, Barwick2017SpectralMackey, BalmerDellAmbrogio2020Mackey, Kaledin_Mackey_Profunctors, Lenz-Mackey, marc-n-infty}, six-functor formalisms \cites{GaitsgoryRozenbluym2017StudyDAG,mann2022sixFunctor,ScholteSixFunctors, Kuijper_Six_Functors}, higher commutative/normed structures \cites{harpaz2020ambidexterity, BachmannHoyois2021Norms,CLL_Spans, CHLL_Bispans2}, and the theory of traces \cites{BenZviNadler, CCRY_Characters}, among others.

An important feature of span categories is that they naturally encode adjunctions: given a functor $F\colon \Cc \to \Cat_{\infty}$ satisfying certain adjointability conditions, there exists an extension $\Span(\Cc) \to \Cat_{\infty}$ which captures both the original functoriality and a compatible system of adjoints. Two approaches to making this precise have emerged in the literature:
\begin{itemize}
    \item The first, due to Barwick \cite{Barwick2017SpectralMackey}, provides an explicit construction called \textit{unfurling}: applying the span construction to the cartesian fibration $p\colon \Ee \to \Cc$ classified by $F$ gives rise to a cocartesian fibration $\Span(\Ee,\Ee^{p\textup{-}\cart},\Ee) \to \Span(\Cc)$ whose unstraightening encodes both $F$ as well as its additional adjoints; here $\Ee^{p\textup{-}\cart} \subseteq \Ee$ denotes the wide subcategory of $p$-cartesian morphisms. 
    \item The second approach, developed by Gaitsgory--Rozenblyum \cite{GaitsgoryRozenbluym2017StudyDAG}, takes a more conceptual view: they showed that $\Span(\Cc)$ can naturally be enhanced to an $(\infty,2)$-category $\bSpan(\Cc)$ with the universal property of freely adding right adjoints to morphisms in $\Cc$. This universal property, later established rigorously by MacPherson \cite{MacPherson2022Bivariant} and Stefanich \cite{Stefanich2020Correspondences}, ensures that any right adjointable functor $F\colon \Cc \to \Cat_{\infty}$ extends uniquely to a 2-functor $\bSpan(\Cc) \to \bCat_{\infty}$.
\end{itemize}

While these approaches should give equivalent results, their relationship has not been made explicit. In this paper, we show that Barwick's unfurling construction can be enhanced to produce precisely the 2-functor coming from the universal property:
\begin{mainthm}[\Cref{thm:Universality_Unfurling}]
	Let $\Cc$ be an $\infty$-category with pullbacks and let $F\colon \Cc \to \bCat_{\infty}$ be a right adjointable functor. Then the 2-functor $\bSpan(\Cc) \to \bCat_{\infty}$ obtained from $F$ via the universal property is equivalent to the 2-categorical unfurling construction applied to $F$.
\end{mainthm}

More precisely, we show that the 2-functor
\[
\bSpan(p)\colon \bSpan(\Ee,\Ee^{p\textup{-}\cart},\Ee) \to \bSpan(\Cc)
\]
induced by the cartesian fibration $p\colon \Ee \to \Cc$ classified by $F$ is a 1-cocartesian fibration of $(\infty,2)$-categories whose pullback along $\Cc \hookrightarrow \bSpan(\Cc)$ recovers $p$, refining Barwick's result that this holds on the underlying $(\infty,1)$-categories. By 2-categorical straightening/unstraightening, recalled in \Cref{subsec:2-categorical-unstraightening}, this results in a 2-functor
\[
    \bUnf(F)\colon \bSpan(\Cc) \to \bCat_{\infty}
\]
extending $F$, which by uniqueness must be the universal one.

We further prove several variations of this result:
\begin{itemize}
    \item A version for functors $F\colon \Cc \to \Cat_{\infty}$ that only admit right adjoints for \textit{some} morphisms in $\Cc$;
    \item A version for \textit{contravariant} \textit{left} adjointable functors $F\colon \Cc\catop \to \Cat_{\infty}$;
    \item A version for contravariant \textit{right} adjointable functors $F\colon \Cc\catop \to \Cat_{\infty}$.
\end{itemize}
The first two variations are proved in an almost identical way to our main result, and all these variations are captured by \Cref{thm:Universality_Unfurling}. The third variation is a bit more subtle and is addressed in \Cref{prop:subtle}.

The uniqueness part of the $(\infty,2)$-universal property sometimes allows one to easily identify different unfurling constructions where comparing the underlying $(\infty,1)$-categories by hand would be cumbersome. As an example of this, we prove that the two competing constructions of \emph{cartesian normed structures} due to Nardin--Shah \cite{NardinShah} and to Cnossen--Haugseng--Lenz--Linskens \cite{CHLL_Bispans2} agree, affirming a conjecture of the latter authors.

\subsection*{Conventions} We freely use the language of $\infty$-categories, as developed in \cite{lurie2009HTT,lurie2016HA}. By an \textit{$(\infty,2)$-category} we mean a 2-fold complete Segal space. We denote an $(\infty,2)$-categorical enhancement of an $(\infty,1)$-category with a boldface font, e.g.\ $\bSpan(\Cc)$ and $\bCat_{\infty}$. For an $(\infty,2)$-category $\Dd$, we denote by $\Dd\catop$ the $(\infty,2)$-category obtained from $\Dd$ by flipping the directions of 1-morphisms, and by $\Dd\catco$ the one obtained by flipping the directions of 2-morphisms.

\subsection*{Acknowledgments} B.C.\ is an associate member of the SFB 1085 `Higher Invariants' at the University of Regensburg, funded by the DFG. T.L.\ is an associate member of the Hausdorff Center for Mathematics at the University of Bonn (DFG GZ 2047/1, project ID 390685813). M.R.\ is funded by the Deutsche Forschungsgemeinschaft (DFG, German Research Foundation) -- Project-ID 427320536 -- SFB 1442, as well as by Germany's Excellence Strategy EXC 2044 390685587, Mathematics Münster: Dynamics--Geometry--Structure, and in the beginning stages of this article was supported by the Danish National Research Foundation through the Copenhagen Centre for Geometry and Topology (DNRF151). 

M.R.\ and B.C.\ are grateful to the University of Copenhagen for hosting a Masterclass on parametrized homotopy theory during which parts of this research were conducted.

\section{The universal property of the span 2-category}

We start by recalling the $(\infty,2)$-category $\bSpan(\Cc,\Cc_L,\Cc_R)$ for an adequate triple $(\Cc,\Cc_L,\Cc_R)$, together with its various universal properties when $\Cc_L = \Cc$ or $\Cc_R = \Cc$.

\begin{definition}
	An \textit{adequate triple} is a triple $(\Cc,\Cc_L,\Cc_R)$ where $\Cc$ is an $\infty$-category and $\Cc_L,\Cc_R \subseteq \Cc$ are wide subcategories such that for every morphism $l\colon X \to Z$ in $\Cc_L$ and $r\colon Y \to Z$ there exists a pullback square in $\Cc$ of the form
	\[
	\begin{tikzcd}
		W \dar[swap]{l'} \rar{r'} \drar[pullback] & X \dar{l} \\
		Y \rar{r} & Z\rlap,
	\end{tikzcd}
	\]
	and for every such pullback square the morphism $l'$ is in $\Cc_L$ while $r'$ is in $\Cc_R$. A \textit{morphism of adequate triples} $p\colon (\Ee,\Ee_L,\Ee_R) \to (\Cc,\Cc_L,\Cc_R)$ is a functor $p\colon \Ee \to \Cc$ satisfying $p(\Ee_L) \subseteq \Cc_L$ and $p(\Ee_R) \subseteq \Cc_R$, and which preserves pullback squares of morphisms in $\Ee_L$ along morphisms in $\Ee_R$.

	A pair $(\Cc,\Cc_L)$ consisting of an $\infty$-category $\Cc$ and a wide subcategory $\Cc_L \subseteq \Cc$ is called a \textit{span pair} if the triple $(\Cc,\Cc_L,\Cc)$ is an adequate triple.
\end{definition}

Every adequate triple $(\Cc,\Cc_L, \Cc_R)$ determines an $(\infty,2)$-category $\bSpan(\Cc,\Cc_L,\Cc_R)$. Informally, its objects are the objects of $\Cc$, its 1-morphisms are spans $X \xleftarrow{l} U \xrightarrow{r} Y$ with $l \in \Cc_L$ and $r \in \Cc_R$, and its 2-morphisms are diagrams of the form
\[
\begin{tikzcd}[row sep=tiny, column sep=small]
	& U \dlar[swap]{l} \drar{r} \ar{dd}{f} \\
	X && Y \\
	& U' \ular{l'} \urar[swap]{r'}
\end{tikzcd}
\]
where composition of 1-morphisms is given by forming pullbacks in $\Cc$. For a precise construction of this $(\infty,2)$-category, we refer to \cite[Corollary~3.1.12]{Stefanich2020Correspondences}. Similarly, every morphism of adequate triples $p\colon (\Ee,\Ee_L,\Ee_R) \to (\Cc,\Cc_L,\Cc_R)$ induces a 2-functor $\bSpan(p)\colon \bSpan(\Ee,\Ee_L,\Ee_R) \to \bSpan(\Cc,\Cc_L,\Cc_R)$.

The universal property of the span 2-category of a span \emph{pair} is phrased in terms of a certain adjointability criterion:

\begin{definition}
	\label{def:BC_Functor}
	Let $(\Cc,\Cc_L)$ be a span pair and let $\Dd$ be an $(\infty,2)$-category. A functor $F\colon \Cc \to \Dd$ is said to be \textit{covariantly left $L$-adjointable} if it satisfies the following conditions:
	\begin{enumerate}[(1)]
		\item For every morphism $l\colon X \to Y$ in $\Cc_L$, the functor $l^* \coloneqq  F(l)\colon F(X) \to F(Y)$ admits a left adjoint $l_!\colon F(Y) \to F(X)$;
		\item For every pullback square
		\[
		\begin{tikzcd}
			X' \dar[swap]{k} \rar{h} \drar[pullback] & X \dar{l} \\
			Y' \rar{g} & Y
		\end{tikzcd}
		\]
		in $\Cc$ with $l \in \Cc_L$, the Beck--Chevalley transformation $h_!k^* \Rightarrow l^*g_!$ is an equivalence of morphisms $F(X) \to F(Y')$ in $\Dd$.
	\end{enumerate}
	If $F,G\colon \Cc \to \Dd$ are covariantly left $L$-adjointable functors, we say that a transformation $\alpha\colon F \Rightarrow G$ is \textit{left $L$-adjointable} if for every morphism $l\colon X \to Y$ in $\Cc_L$ the naturality square
	\[
	\begin{tikzcd}
		F(X) \dar[swap]{F(l)} \rar{\alpha(X)} & G(Y) \dar{G(l)} \\
		F(Y) \rar{\alpha(Y)} & G(Y)
	\end{tikzcd}
	\]
	is vertically left adjointable. We let $\bFun_{L\ladj}(\Cc,\Dd) \subseteq \bFun(\Cc,\Dd)$ denote the locally full subcategory spanned by the left $L$-adjointable functors and the left $L$-adjointable natural transformations.

	Dually, we say that $F$ is \textit{covariantly right $L$-adjointable} if the functor $F\catco\colon \Cc = \Cc\catco \to \Dd\catco$ is covariantly left $L$-adjointable, i.e.\ each $l^*$ has a \textit{right} adjoint $l_*$ satisfying Beck--Chevalley. We similarly obtain a notion of right $L$-adjointable transformations $F \Rightarrow G$, resulting in a subcategory $\bFun_{L\radj}(\Cc,\Dd)$.
\end{definition}

The previous definition can immediately be dualized to functors out of $\Cc\catop$:
\begin{definition}
	Let $(\Cc,\Cc_R)$ be a span pair and let $\Dd$ be an $(\infty,2)$-category. A functor $F\colon \Cc\catop \to \Dd$ is said to be \textit{contravariantly left $R$-adjointable} if the functor $F\catop\colon \Cc \to \Dd\catop$ is covariantly right $R$-adjointable, i.e. each $r^*\coloneqq  F(r)$ admits a {left} adjoint $r_!$ {in $\Dd$} satisfying the Beck--Chevalley condition. We say $F$ is \textit{contravariantly right $R$-adjointable} if $F\catop\colon \Cc \to \Dd\catop$ is covariantly left $R$-adjointable. We obtain subcategories
	\[
	\bFun_{R\ladj}(\Cc\catop,\Dd) \subseteq \bFun(\Cc\catop,\Dd) \qquadtext{ and } \bFun_{R\radj}(\Cc\catop,\Dd) \subseteq \bFun(\Cc\catop,\Dd).
	\]
	When the variance is clear from context, we will drop the adverb `co/contravariantly' and simply refer to left/right adjointable functors.
\end{definition}

\begin{remark}
	Our terminology is a variation of that of \cite[Section~2.2]{ElmantoHaugseng2023Distributivity}, who use the phrases `adjointable' and `coadjointable' for what we call `contravariantly adjointable' and `covariantly adjointable,' respectively. Such functors are also called \textit{bivariant theories} \cite[Section~3.2]{MacPherson2022Bivariant} or \textit{functors satisfying the Beck--Chevalley condition} \cite[Definition~3.4.5]{Stefanich2020Correspondences}.
\end{remark}

If $(\Cc,\Cc_L)$ is a span pair, it is not difficult to check that the inclusion $\Cc \hookrightarrow \bSpan(\Cc,\Cc_L,\Cc)$ is covariantly right $L$-adjointable: for a morphism $l\colon X \to Y$, its image in the span 2-category is given by the span $X \xleftarrow{\id_X} X \xrightarrow{l} Y$, and one can observe that a right adjoint in $\bSpan(\Cc,\Cc_L,\Cc)$ is given by the span $Y \xleftarrow{l} X \xrightarrow{\id_X} X$, with unit and counit given by the following two morphisms of spans:
\[
\begin{tikzcd}
	X \rar[equal] & X \dar{\Delta_l} \rar[equal] & X \\
	& X \times_Y X \ular{\pr_1} \urar[swap]{\pr_2}
\end{tikzcd}
\qquadtext{ and }
\begin{tikzcd}
	& X \dlar[swap]{l} \drar{l} \ar[d, "l"] \\
	Y \rar[equal] & Y \rar[equal] & Y.
\end{tikzcd}
\]
It turns out that this inclusion is in fact \textit{universal} among right $L$-adjointable functors out of $\Cc$, see for example \cite[Chapter 7, Theorem~3.2.2]{GaitsgoryRozenbluym2017StudyDAG}, \cite[Theorem~4.2.6]{MacPherson2022Bivariant}, \cite[Theorem~3.4.18]{Stefanich2020Correspondences}.

\begin{theorem}
	Let $(\Cc,\Cc_L)$ be a span pair and let $\Dd$ be an $(\infty,2)$-category. Then the inclusion $\Cc \hookrightarrow \bSpan(\Cc,\Cc_L,\Cc)$ is right $L$-adjointable and restriction along this inclusion induces an equivalence of $(\infty,2)$-categories
	\[
	\bFun_2(\bSpan(\Cc,\Cc_L,\Cc),\Dd) \iso \bFun_{L\radj}(\Cc,\Dd).\pushQED{\qed}\qedhere\popQED
	\]
\end{theorem}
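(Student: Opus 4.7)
The plan is to break the proof into two parts: first verifying that the inclusion $\Cc \hookrightarrow \bSpan(\Cc,\Cc_L,\Cc)$ is itself right $L$-adjointable (so that the restriction functor indeed lands in $\bFun_{L\radj}(\Cc,\Dd)$), and then constructing a quasi-inverse to restriction. The first part is essentially the sketch given just before the theorem: for $l \in \Cc_L$, the span $X \xleftarrow{\id} X \xrightarrow{l} Y$ has $Y \xleftarrow{l} X \xrightarrow{\id} X$ as right adjoint, with unit and counit 2-cells controlled by the diagonal $\Delta_l\colon X \to X\times_Y X$. I would verify the triangle identities by pasting pullbacks, using that $\pr_1\circ \Delta_l = \id_X = \pr_2\circ \Delta_l$; the Beck--Chevalley condition is then essentially tautological, since composition of spans is by definition given by pullback in $\Cc$.

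For the equivalence, I would construct an extension of any right $L$-adjointable $F\colon \Cc \to \Dd$ to a 2-functor $\tilde F\colon \bSpan(\Cc,\Cc_L,\Cc) \to \Dd$. Informally $\tilde F$ must send a span $X \xleftarrow{l} U \xrightarrow{r} Y$ to the composite $F(r)\circ F(l)_*$, where $F(l)_*$ is the right adjoint supplied by the adjointability hypothesis, and 2-cells of spans get sent to the natural transformations built from Beck--Chevalley comparisons. To make this coherent, I would work with a Segal-space model of $\bSpan(\Cc,\Cc_L,\Cc)$ as in Haugseng or Stefanich, in which an $(m,n)$-cell is a grid of spans, and define $\tilde F$ degreewise by iterating the ``pass to right adjoint'' operation along $\Cc_L$-edges while keeping the other edges covariant. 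The value on every cell is forced on objects and basic 1-morphisms, so the only real content is that the extension to higher cells exists and is unique up to contractible choice.

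The main obstacle is precisely this coherence: showing that the degreewise assignment assembles into a genuine map of 2-fold complete Segal spaces. I would attack this inductively on the simplicial dimension, where each new coherence reduces to a finite pasting of Beck--Chevalley squares that are equivalences by hypothesis; this mirrors the strategy of Stefanich's Theorem~3.4.18 and of Gaitsgory--Rozenblyum. Once essential surjectivity is established, fully faithfulness is formal: the same argument that showed the inclusion is right $L$-adjointable shows that any 2-natural transformation $\tilde F \Rightarrow \tilde G$ between extensions is itself right $L$-adjointable, so by the mate correspondence it is determined by its restriction to $\Cc$. This is also exactly the feature that pins down the target of restriction as $\bFun_{L\radj}(\Cc,\Dd)$ rather than all of $\bFun(\Cc,\Dd)$.
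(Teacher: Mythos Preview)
The paper does not give its own proof of this theorem: it is stated with a \qed\ immediately after the display, and the preceding paragraph cites \cite[Chapter~7, Theorem~3.2.2]{GaitsgoryRozenbluym2017StudyDAG}, \cite[Theorem~4.2.6]{MacPherson2022Bivariant}, and \cite[Theorem~3.4.18]{Stefanich2020Correspondences} for the proof. So there is nothing in the paper to compare your argument against; the theorem is imported wholesale from the literature.

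Your sketch is in the spirit of those references, and the first part (right $L$-adjointability of the inclusion) is exactly what the paper verifies informally just before the statement. But you should be aware that what you have written is not yet a proof: the entire difficulty lies in the coherence step you flag as ``the main obstacle,'' and saying you would ``attack this inductively on the simplicial dimension, where each new coherence reduces to a finite pasting of Beck--Chevalley squares'' is a description of what one hopes happens rather than an argument that it does. In Stefanich's approach this is handled by a careful analysis of lax functors and a reduction to a 1-categorical statement, and in MacPherson's by a delicate fibrational argument; neither is a routine induction. Your fully-faithfulness paragraph is also too quick: the claim that a 2-natural transformation between extensions is automatically right $L$-adjointable needs justification, and the mate correspondence by itself does not give uniqueness of the extension on higher cells. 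If you intend to actually prove the theorem rather than cite it, you will need to engage with one of these arguments in detail; if you are content to cite it, then your write-up should simply do so, as the paper does.
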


This theorem has the following immediate corollaries:

\begin{corollary}\label{cor:other-univ-properties}
	Let $(\Cc,\Cc_L)$ and $(\Cc,\Cc_R)$ be span pairs, and let $\Dd$ be an $(\infty,2)$-category.
	\begin{enumerate}[(1)]
		\item The inclusion $\Cc \hookrightarrow \bSpan(\Cc,\Cc_L,\Cc)^{\co}$ is the universal covariant left $L$-adjointable functor:
		\[
			\bFun_2(\bSpan(\Cc,\Cc_L,\Cc)^{\co},\Dd) \iso \bFun_{L\ladj}(\Cc,\Dd).
		\]
		\item The inclusion $\Cc\catop \hookrightarrow \bSpan(\Cc,\Cc,\Cc_R)$ is the universal contravariant left $R$-adjointable functor:
		\[
			\bFun_2(\bSpan(\Cc,\Cc,\Cc_R),\Dd) \iso \bFun_{R\ladj}(\Cc\catop,\Dd).
		\]
		\item The inclusion $\Cc\catop \hookrightarrow \bSpan(\Cc,\Cc,\Cc_R)^{\co}$ is the universal contravariant right $R$-adjointable functor:
		\[
		\bFun_2(\bSpan(\Cc,\Cc,\Cc_R)^{\co},\Dd) \iso \bFun_{R\radj}(\Cc\catop,\Dd).
		\]
	\end{enumerate}
\end{corollary}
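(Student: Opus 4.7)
The plan is to deduce all three parts from the preceding theorem by applying the two standard dualities $(-)\catco$ and $(-)\catop$ on $(\infty,2)$-categories. Two key facts make this work: first, the operation $(-)\catco$ interchanges left and right adjoints, so by construction of \Cref{def:BC_Functor} it identifies left $L$-adjointable data with right $L$-adjointable data; second, reversing every span yields an equivalence $\bSpan(\Cc,\Cc,\Cc_R) \simeq \bSpan(\Cc,\Cc_R,\Cc)\catop$, since swapping the two legs swaps the roles of the two wide subcategories.

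For part (1), given a left $L$-adjointable functor $F\colon \Cc \to \Dd$, I would view it as a functor $\Cc\catco \to \Dd\catco$, which is right $L$-adjointable by definition; the theorem then produces a unique extension to $\bSpan(\Cc,\Cc_L,\Cc) \to \Dd\catco$, and a second application of $(-)\catco$ yields the desired 2-functor out of $\bSpan(\Cc,\Cc_L,\Cc)\catco$. At the level of functor 2-categories this is implemented by the standard identification $\bFun_2(\Ee\catco,\Dd) \simeq \bFun_2(\Ee,\Dd\catco)\catco$, restricted appropriately to the adjointable subcategories. For part (2), combining the equivalence $\bSpan(\Cc,\Cc,\Cc_R) \simeq \bSpan(\Cc,\Cc_R,\Cc)\catop$ with the analogous identity $\bFun_2(\Ee\catop,\Dd) \simeq \bFun_2(\Ee,\Dd\catop)\catop$ gives a chain
\[
\bFun_2(\bSpan(\Cc,\Cc,\Cc_R),\Dd) \simeq \bFun_2(\bSpan(\Cc,\Cc_R,\Cc),\Dd\catop)\catop \simeq \bFun_{R\radj}(\Cc,\Dd\catop)\catop \simeq \bFun_{R\ladj}(\Cc\catop,\Dd),
\]
where the middle step invokes the theorem and the last step is essentially the definition of contravariantly left $R$-adjointable. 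Part (3) then follows either by applying $(-)\catco$ to part (2) or, equivalently, by running the $(-)\catop$ argument on part (1).

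The only real obstacle I anticipate is careful bookkeeping: one must verify that the subcategory conditions cutting out the various flavours of $\bFun_{L/R,\ladj/\radj}$ — both the functor-level Beck--Chevalley condition and the vertical adjointability condition on naturality squares of transformations — are exchanged correctly by $(-)\catco$ and $(-)\catop$. This is a mechanical but somewhat tedious unwinding of \Cref{def:BC_Functor}, requiring one to keep track of which adjunction (and which naturality square) gets reversed under which operation.
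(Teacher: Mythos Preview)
Your proposal is correct and follows essentially the same approach as the paper: the paper's proof simply records the equivalence $\bSpan(\Cc,\Cc,\Cc_R)\catop \simeq \bSpan(\Cc,\Cc_R,\Cc)$ together with the three identifications $\bFun_{L\ladj}(\Cc,\Dd) \simeq \bFun_{L\radj}(\Cc,\Dd\catco)\catco$, $\bFun_{R\ladj}(\Cc\catop,\Dd) \simeq \bFun_{R\radj}(\Cc,\Dd\catop)\catop$, and $\bFun_{R\radj}(\Cc\catop,\Dd) \simeq \bFun_{R\radj}(\Cc,\Dd^{\coop})^{\coop}$, which are exactly the duality manipulations you describe. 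Your discussion is in fact more explicit than the paper's one-line proof, and the bookkeeping you flag is precisely what underlies those three displayed equivalences.
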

\begin{proof}
	This is immediate from the equivalence $\bSpan(\Cc,\Cc,\Cc_R)\catop \simeq \bSpan(\Cc,\Cc_R,\Cc)$ and the following three equivalences:
	\begin{align*}
		\bFun_{L\ladj}(\Cc,\Dd) &\simeq \bFun_{L\radj}(\Cc,\Dd^{\co})^{\co}; \\
		\bFun_{R\ladj}(\Cc\catop,\Dd) &\simeq \bFun_{R\radj}(\Cc,\Dd^{\op})\catop; \\
		\bFun_{R\radj}(\Cc\catop,\Dd) &\simeq \bFun_{R\ladj}(\Cc,\Dd^{\coop})^{\coop}. \qedhere
	\end{align*}
\end{proof}

\section{A 2-categorical unfurling construction}
Barwick's unfurling construction \cite{Barwick2017SpectralMackey} provides a method for extending a right $L$-adjointable functor $F\colon \Cc \to \Cat_{\infty}$ to a functor $\Unf(F)\colon \Span(\Cc,\Cc_L,\Cc) \to \Cat_{\infty}$: if $p\colon\Ee\to\Cc$ is the cocartesian fibration classified by $F$, Barwick proved that the induced functor $\Span(p)\colon \Span(\Ee,{p^{-1}(\Cc_L) \cap \Ee^{p\textup{-cart}}},\Ee) \to \Span(\Cc,\Cc_L,\Cc)$ is again a cocartesian fibration whose straightening $\Span(\Cc,\Cc_L,\Cc) \to \Cat_{\infty}$ extends $F$. In this section we will show that this construction can be enhanced to even produce a 2-functor $\bSpan(\Cc,\Cc_L,\Cc) \to \bCat_{\infty}$, {which must then necessarily agree} with the universal extension obtained from the universal property of $\bSpan(\Cc,\Cc_L,\Cc)$.

\subsection{Cocartesian fibrations of 2-categories}
\label{subsec:2-categorical-unstraightening}

We start with a brief recollection on cocartesian fibrations between $(\infty,2)$-categories, see for instance \cite[Section~5]{Nuiten2021Straightening} or \cite[Chapter~11]{GaitsgoryRozenbluym2017StudyDAG}.

\begin{definition}[2-cocartesian fibrations]
	Let $p\colon \Ee \to \Cc$ be a 2-functor between $(\infty,2)$-categories. We say that $p$ is a \textit{2-cocartesian fibration} if it satisfies the following conditions:
	\begin{enumerate}[(1)]
		\item (Homwise cartesian fibration) For all objects $\tilde{X},\tilde{Y} \in \Ee$, the induced map on Hom-categories
		\[
		\Hom_{\Ee}(\tilde{X},\tilde{Y}) \to \Hom_{\Cc}(p\tilde{X},p\tilde{Y})
		\]
		is a cartesian fibration of $\infty$-categories, and for morphisms $\tilde{f}\colon \tilde{X}' \to \tilde{X}$ and $\tilde{g}\colon \tilde{Y} \to \tilde{Y}'$ in $\Ee$, the composition functor
		\[
		\tilde{g} \circ (-) \circ \tilde{f}\colon \Hom_{\Ee}(\tilde{X},\tilde{Y}) \to \Hom_{\Ee}(\tilde{X}',\tilde{Y}')
		\]
		preserves cartesian 2-morphisms.

		\item (Enough 2-cocartesian lifts) For every morphism $f\colon X \to Y$ in $\Cc$ and every lift $\tilde{X} \in \Ee$ of $X$, there exists a 2-cocartesian lift $\tilde{f}\colon \tilde{X} \to \tilde{Y}$ of $f$, meaning that for every object $\tilde{Z} \in \Ee$, the square
		\[
		\begin{tikzcd}
			\Hom_{\Ee}(\tilde{Y},\tilde{Z}) \rar{- \circ \tilde{f}} \dar[swap]{p} & \Hom_{\Ee}(\tilde{X},\tilde{Z}) \dar{p} \\
			\Hom_{\Cc}(p\tilde{Y},p\tilde{Z}) \rar{- \circ p\tilde{f}} & \Hom_{\Cc}(p\tilde{X},p\tilde{Z})
		\end{tikzcd}
		\]
		is a pullback square.
	\end{enumerate}

	We say that $p$ is a \textit{1-cocartesian fibration} if it is a 2-cocartesian fibration and the functors $\Hom_{\Ee}(\tilde{X},\tilde{Y}) \to \Hom_{\Cc}(\tilde{X},Y)$ in (1) are in fact right fibrations.

    Dually, we have notions of \emph{2-cartesian fibrations} and \emph{1-cartesian fibrations} (which are homwise \emph{left} fibrations).

	Given an $(\infty,2)$-category $\Cc$, we let $\bCocart_2(\Cc)$ denote the locally full sub-2-category of $(\bCat_{(\infty,2)})_{/\Cc}$ spanned by the 2-cocartesian fibrations $\Ee \to \Cc$ and those 2-functors which preserve 2-cocartesian 1-morphisms and cartesian 2-morphisms. We let $\bCocart_1(\Cc) \subseteq \bCocart_2(\Cc)$ denote the full subcategory spanned by the 1-cocartesian fibrations.
\end{definition}

\begin{theorem}[Straightening and unstraightening for $(\infty,2)$-categories, {\cite[Theorem~6.1, Remark~6.12, Theorem~6.20]{Nuiten2021Straightening}}]
	For every $(\infty,2)$-category $\Cc$, there is an equivalence of $(\infty,2)$-categories
	\[
	\Str^{\cc}\colon \bCocart_2(\Cc) \iso \bFun_2(\Cc,\bCat_{(\infty,2)})
	\]
	which is natural in $\Cc$. Furthermore, this equivalence restricts to an equivalence
	\[
	\Str^{\cc}\colon \bCocart_1(\Cc) \iso \bFun_2(\Cc,\bCat_{(\infty,1)}),
	\]
	and if $\Cc$ is an $(\infty,1)$-category, this recovers the usual straightening equivalence. Similarly, there is an equivalence
	\[
	\Str^{\ct}\colon \bCart_2(\Cc) \iso \bFun_2(\Cc\catop,\bCat_{(\infty,2)})
	\]
    which restricts to $\bCart_1(\Cc) \iso \bFun_2(\Cc\catop,\bCat_{\infty})$. \qed
\end{theorem}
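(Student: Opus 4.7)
The plan is to construct the unstraightening functor by hand and then verify the equivalence via mapping-category computations, bootstrapping from Lurie's classical $(\infty,1)$-categorical straightening.

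First, I would write down the unstraightening $\Unf^{\cc}(F)$ of a 2-functor $F\colon \Cc \to \bCat_{(\infty,2)}$ as a direct generalization of the classical Grothendieck construction. Working in the model of 2-fold complete Segal spaces, the $n$-simplices of $\Unf^{\cc}(F)$ consist of an $n$-simplex $\sigma$ of $\Cc$ together with a compatible system of objects and morphisms in the values $F(\sigma_i)$, connected by the functoriality data of $F$ along the edges of $\sigma$. From this explicit description, the forgetful map $\Unf^{\cc}(F) \to \Cc$ admits obvious 2-cocartesian lifts (given by the unit sections into the fibers), and the homwise fibration property is visible on $n$-simplices.

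Next, to prove that $\Unf^{\cc}$ is an equivalence, I would compute both mapping $(\infty,1)$-categories directly. The definition of a 2-cocartesian fibration forces a 2-functor $\Unf^{\cc}(F) \to \Unf^{\cc}(G)$ over $\Cc$ that preserves 2-cocartesian 1-morphisms and cartesian 2-morphisms to be determined by its restrictions to the fibers together with coherence data along cocartesian lifts; this is exactly the data of a natural transformation $F \Rightarrow G$. Essential surjectivity amounts to showing that any 2-cocartesian fibration is locally equivalent to an unstraightening, which can be reduced via the Yoneda-type argument to the analogous $(\infty,1)$-statement applied to Hom-categories: the homwise cartesian fibration condition identifies $\Hom_{\Ee}(\tilde X, \tilde Y)$ with the $(\infty,1)$-unstraightening of a functor out of $\Hom_{\Cc}(pX, pY)$, allowing one to invoke Lurie's result as a black box.

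The restriction to 1-cocartesian fibrations is then formal: the right-fibration condition on Hom-categories is equivalent to every fiber being an $(\infty,1)$-category, which corresponds on the straightening side to landing in the full sub-2-category $\bCat_{(\infty,1)} \subseteq \bCat_{(\infty,2)}$. Compatibility with the $(\infty,1)$-straightening when $\Cc$ is an $(\infty,1)$-category follows by comparing constructions on $n$-simplices, and naturality in $\Cc$ is built into the definition of $\Unf^{\cc}$. The cartesian variant is obtained by taking opposites, using $\bCart_2(\Cc) \simeq \bCocart_2(\Cc\catop)\catop$ and the self-duality $\bCat_{(\infty,2)}\catop \simeq \bCat_{(\infty,2)}$ of the target.

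The main obstacle is managing 2-categorical coherences in the explicit construction of $\Unf^{\cc}$: verifying that composition of 2-cocartesian 1-morphisms interacts correctly with the cartesian 2-morphisms, and that this composition is strictly functorial up to coherent homotopy, requires delicate bookkeeping in whichever model of $(\infty,2)$-categories one adopts. This is the technical heart of Nuiten's argument, which we invoke as a black box.
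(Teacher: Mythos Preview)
The paper does not prove this theorem at all: the statement is quoted from \cite{Nuiten2021Straightening} and closed immediately with a \qed, so it functions purely as a black-box input to the rest of the article. There is therefore nothing to compare your proposal against---the authors never attempt an argument of their own here.

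Your sketch is a plausible high-level outline of how one \emph{might} approach such a result, but since the paper makes no such attempt, the appropriate ``proof'' in this context is simply the citation. If you were genuinely trying to reprove Nuiten's theorem, note that several steps in your outline would require substantial work you have not indicated: the explicit Grothendieck construction for a 2-functor into $\bCat_{(\infty,2)}$ in the model of 2-fold complete Segal spaces is not a routine generalization of the 1-categorical case (the ``compatible system of objects and morphisms'' hides exactly the coherence problem you flag at the end), and the essential surjectivity argument via Hom-categories does not obviously globalize from a local identification to a global equivalence without further input. Your final paragraph effectively concedes this by invoking Nuiten as a black box anyway---at which point the honest move is to do what the paper does and simply cite the reference.
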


There is the following useful recognition principle for 2-cocartesian fibrations:

\begin{lemma}
	Let $p\colon\Ee\to\Cc$ be a homwise right fibration. Then a morphism $f\colon x \to y$ in $\Ee$ is $2$-cocartesian if and only if it is cocartesian for the induced map on underlying $(\infty,1)$-categories.
	\begin{proof}
		For a $2$-cocartesian morphism, we require the left square in the following diagram to be a pullback, while being cocartesian for the underlying $(\infty,1)$-categories amounts to the induced square on groupoid cores (depicted on the right) being a pullback for every $z\in\Ee$:
		\[\hskip-14.14pt\hfuzz=14.15pt
		\begin{tikzcd}[cramped]
			\Hom_{\Ee}(y,z)\arrow[r, "-\circ f"]\arrow[d,"p"'] & \Hom_{\Ee}(x,z)\arrow[d,"p"]\\
			\Hom_{\Cc}(p(y),p(z))\arrow[r, "-\circ p(f)"] & \Hom_{\Cc}(p(x),p(z))
		\end{tikzcd}
		\qquad
		\begin{tikzcd}[cramped]
			\iota\Hom_{\Ee}(y,z)\arrow[r, "-\circ f"]\arrow[d,"p"'] & \iota\Hom_{\Ee}(x,z)\arrow[d,"p"]\\
			\iota\Hom_{\Cc}(p(y),p(z))\arrow[r, "-\circ p(f)"] & \iota\Hom_{\Cc}(p(x),p(z))
		\end{tikzcd}
		\]
		As $\iota$ is a right adjoint, we immediately see that if $f$ is $2$-cocartesian, then it is cocartesian for the underlying functor of $(\infty,1)$-categories. Conversely, note that by assumption on $p$ both vertical maps in the square on the left are right fibrations. Thus, the comparison map $\Hom(y,z)\to\Hom(x,z)\times_{\Hom(p(x),p(z))}\Hom(p(y),p(z))$ is a map of right fibrations over $\Hom(p(y),p(z))$ and so we can check whether it is an equivalence after passing to fibers. Using that fibers of a right fibration are $\infty$-groupoids, these can be identified with the fibers of the analogous comparison map $\iota\Hom(y,z)\to\iota\Hom(x,z)\times_{\iota\Hom(p(x),p(z))}\iota\Hom(p(y),p(z))$, and the claim follows.
	\end{proof}
\end{lemma}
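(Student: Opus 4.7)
The plan is to reduce the question to a comparison of pullback squares of $\infty$-categories versus pullback squares of $\infty$-groupoids, exploiting the homwise right fibration hypothesis on $p$.

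First I would unfold both conditions. The 2-cocartesian condition asks that the naturality square
\[
\begin{tikzcd}[cramped]
\Hom_\Ee(y,z) \rar{-\circ f} \dar[swap]{p} & \Hom_\Ee(x,z) \dar{p} \\
\Hom_\Cc(p(y),p(z)) \rar{-\circ p(f)} & \Hom_\Cc(p(x),p(z))
\end{tikzcd}
\]
be a pullback of $\infty$-categories, while being cocartesian on underlying $(\infty,1)$-categories asks that the square obtained from this one by applying the groupoid core functor $\iota$ be a pullback of spaces.

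For the forward implication, I would simply note that $\iota\colon\Cat_\infty\to\Spc$ is right adjoint to the inclusion, so it preserves pullbacks; hence any 2-cocartesian morphism is cocartesian on underlying $(\infty,1)$-categories.

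For the converse, the key idea is that the two vertical maps are right fibrations by assumption on $p$. Right fibrations are stable under base change, so the canonical comparison morphism $\Hom_\Ee(y,z) \to \Hom_\Ee(x,z) \times_{\Hom_\Cc(p(x),p(z))} \Hom_\Cc(p(y),p(z))$ is a map of right fibrations over $\Hom_\Cc(p(y),p(z))$. Such a map is an equivalence if and only if it induces an equivalence on all fibers; and since the fibers of a right fibration are $\infty$-groupoids, these fibers are unchanged by passing to groupoid cores. Thus the comparison map is an equivalence precisely when its $\iota$-image is, which is the cocartesian condition on underlying $(\infty,1)$-categories. The main (and essentially only) subtlety is remembering to invoke stability of right fibrations under base change, so that the comparison map genuinely lives in the category of right fibrations where the fiberwise criterion applies.
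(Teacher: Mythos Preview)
Your proposal is correct and follows essentially the same approach as the paper's proof: both unfold the two conditions as pullback squares in $\Cat_\infty$ versus $\Spc$, deduce the forward implication from $\iota$ being a right adjoint, and for the converse use that the comparison map is a map of right fibrations over $\Hom_\Cc(p(y),p(z))$ whose fibers are $\infty$-groupoids and hence can be checked after applying $\iota$. Your explicit mention of stability of right fibrations under base change is the only (minor) addition, and it just makes explicit a step the paper leaves implicit.
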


\begin{corollary}
	\label{cor:Check_Cocartesian_2Functor_Underlying}
	Let $p\colon \Ee \to \Cc$ be a homwise right fibration. Then it is a 1-cocartesian fibration if and only if its underlying functor of $(\infty,1)$-categories is a cocartesian fibration.\qed
\end{corollary}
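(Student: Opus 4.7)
The plan is to reduce the corollary to the preceding lemma by observing that the homwise right fibration hypothesis trivializes all parts of the definition of a 1-cocartesian fibration except the existence of enough 2-cocartesian lifts.

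First, I would note that since $p$ is assumed to be a homwise right fibration, clause (1) of the 2-cocartesian fibration definition is automatic: a right fibration of $\infty$-categories is in particular a cartesian fibration, and every 2-morphism in the source is cartesian. Consequently, the composition functors $\tilde g \circ (-) \circ \tilde f$ automatically preserve cartesian 2-morphisms, as there is nothing to preserve beyond being a morphism above its image. Moreover, the homwise right fibration hypothesis is exactly the extra condition distinguishing 1-cocartesian from 2-cocartesian fibrations. Thus, whether $p$ is a 1-cocartesian fibration reduces entirely to whether it satisfies clause (2), the existence of enough 2-cocartesian lifts.

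Next, I would invoke the preceding lemma: under the homwise right fibration hypothesis, a lift $\tilde f\colon \tilde X \to \tilde Y$ of $f\colon X \to Y$ is 2-cocartesian precisely when it is cocartesian for the underlying functor of $(\infty,1)$-categories. Consequently, $p$ admits enough 2-cocartesian lifts if and only if its underlying $(\infty,1)$-functor admits enough cocartesian lifts, i.e.\ is a cocartesian fibration of $(\infty,1)$-categories. Combining the two steps yields the desired equivalence.

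I do not anticipate a serious obstacle here: the argument is essentially a packaging of the previous lemma together with the trivial observation that the cartesianness conditions in clause (1) hold vacuously for a homwise right fibration. The only point requiring a moment's care is matching the definition of a 1-cocartesian fibration to the two items above, after which both directions of the equivalence become immediate.
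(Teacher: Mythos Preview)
Your proposal is correct and matches the paper's reasoning: the corollary is marked \qed immediately after the lemma, meaning the authors regard it as an immediate consequence, and your write-up simply unpacks that immediacy by noting that the homwise right fibration hypothesis handles clause~(1) and the 1-cocartesian condition, leaving only clause~(2), which the lemma identifies with cocartesianness of the underlying $(\infty,1)$-functor.
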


\subsection{Cocartesian fibrations of span 2-categories}
We will now prove a general result which produces 1-cocartesian fibrations between span 2-categories, refining
the analogous $(\infty,1)$-categorical result by Barwick \cite{Barwick2017SpectralMackey}.

\begin{definition}[{cf.\ \cite[Definition~4.1.3, Remark~4.1.5]{hopkinsLurie2013ambidexterity}}]
	\label{def:BC_Fibration}
	Let $(\Cc,\Cc_L,\Cc_R)$ be an adequate triple. A functor $p\colon \Ee \to \Cc$ is called a \textit{Beck--Chevalley fibration} with respect to $(\Cc_L,\Cc_R)$ if the following three conditions are satisfied:
	\begin{enumerate}[(1)]
		\item For every morphism $l\colon X \to Y$ in $\Cc_L$ and every object $\tilde{Y}$ of $\Ee$ lifting $Y$, there exists a $p$-cartesian lift $\tilde{l}\colon \tilde{X} \to \tilde{Y}$ of $l$;
		\item For every morphism $r\colon X \to Y$ in $\Cc_R$ and every object $\tilde{X}$ of $\Ee$ lifting $X$, there exists a $p$-cocartesian lift $\tilde{r}\colon \tilde{X} \to \tilde{Y}$ of $r$;
		\item Consider a commutative square
		\[
		\begin{tikzcd}
			\tilde{X}' \arrow[r, "\tilde{r'}"] \arrow[d, "\tilde{l'}"'] & \tilde{X} \arrow[d, "\tilde{l}"] \\
			\tilde{Y}' \arrow[r, "\tilde{r}"] & \tilde{Y}
		\end{tikzcd}
		\]
		in $\Ee$ whose image in $\Cc$ is a pullback square satisfying $p(\tilde{l}) \in \Cc_L$ and $p(\tilde{r'}) \in \Cc_R$, and for which $\tilde{r}$ is $p$-cocartesian while $\tilde{l'}$ is $p$-cartesian. Then the morphism $\tilde{r'}$ is $p$-cocartesian if and only if $\tilde{l}$ is $p$-cartesian.
	\end{enumerate}
	We let $\mathrm{Fib}^{\BC}_{\Cc_L,\Cc_R}(\Cc) \subseteq \Cat_{/\Cc}$ denote the (non-full) subcategory spanned by the Beck--Chevalley fibrations $p\colon \Ee \to \Cc$ with respect to $(\Cc_L,\Cc_R)$ and by those functors $\Ee \to \Ee'$ over $\Cc$ which preserve $p$-cartesian morphisms over $\Cc_L$ and $p$-cocartesian morphisms over $\Cc_R$.
\end{definition}

\begin{notation}
\label{nota:Adequate_Triple_Unfurling}
        In the situation of the previous definition, we obtain the following two wide subcategories of $\Ee$:
	\[
		\Ee_R \coloneqq  p^{-1}(\Cc_R) \qquadtext{ and } \Ee_L^{p\textup{-cart}} \coloneqq  p^{-1}(\Cc_L) \cap \Ee^{p\textup{-cart}},
	\]
	where $\Ee^{p\textup{-cart}}$ denotes the wide subcategories of $\Ee$ spanned by the $p$-cartesian morphisms. By \cite[Proposition~2.6]{HHLN2022TwoVariable}, the triple $(\Ee,\Ee_L^{p\textup{-cart}},\Ee_R)$ is adequate, and the functor $p$ defines a morphism of adequate triples to $(\Cc,\Cc_L,\Cc_R)$.
\end{notation}

\begin{observation}
	\label{obs:Beck_Chevalley_Cancellation}
	Let $p\colon \Ee \to \Cc$ be a Beck--Chevalley fibration with respect to $(\Cc_L,\Cc_R)$, and let $\tilde{r}$ be a $p$-cocartesian lift of some morphism $r\colon X \to Y$ in $\Cc_R$. Then $\tilde{r}$ is also cocartesian with respect to the functor $p\vert_{\Ee_R}\colon \Ee_R \to \Cc_R$. Similarly, every $p$-cartesian lift $\tilde{l}$ of a morphism $l$ in $\Cc_L$ is also cartesian with respect to the functor $p\vert_{\Ee_L^{p\textup{-cart}}}\colon \Ee_L^{p\textup{-cart}} \to \Cc_L$, using the left cancelation for $p$-cartesian morphisms.
\end{observation}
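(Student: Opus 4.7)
The plan is to verify both assertions by a direct mapping-space computation. The common shape of both arguments is that a universal property holding in $p\colon\Ee\to\Cc$ survives restriction to the wide subcategories $\Ee_R\to\Cc_R$ (respectively, $\Ee_L^{p\textup{-cart}}\to\Cc_L$), because mapping spaces in these subcategories are pulled back from mapping spaces in $\Ee$ along the inclusions $\Map_{\Cc_R}(-,-)\hookrightarrow\Map_\Cc(-,-)$ and $\Map_{\Cc_L}(-,-)\hookrightarrow\Map_\Cc(-,-)$, supplemented in the left case by the additional $p$-cartesianness constraint on morphisms in $\Ee$.

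For the first claim, I would fix a test object $\tilde{Z}\in\Ee$ and base-change the $p$-cocartesian pullback square
\[
\Map_\Ee(\tilde Y,\tilde Z) \iso \Map_\Ee(\tilde X,\tilde Z)\times_{\Map_\Cc(X,p\tilde Z)}\Map_\Cc(Y,p\tilde Z)
\]
along $\Map_{\Cc_R}(Y,p\tilde Z)\hookrightarrow\Map_\Cc(Y,p\tilde Z)$. The left-hand side becomes $\Map_{\Ee_R}(\tilde Y,\tilde Z)$, and the only point to check is that the base-changed right-hand side agrees with the analogous fiber product formed in $\Ee_R$ and $\Cc_R$; this amounts to the observation that any $\tilde s\colon\tilde X\to\tilde Z$ whose projection factors as $s'\circ r$ with $s',r\in\Cc_R$ automatically has $p\tilde s\in\Cc_R$, since $\Cc_R$ is closed under composition.

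For the second claim, the analogous strategy applies: fix $\tilde Z$ and base-change the $p$-cartesian pullback square for $\tilde l$ along $\Map_{\Cc_L}(p\tilde Z,X)\hookrightarrow\Map_\Cc(p\tilde Z,X)$. The new feature—and what I expect to be the main point of the proof—is that $\Ee_L^{p\textup{-cart}}$ imposes not only that $p$-images lie in $\Cc_L$ but also that morphisms are $p$-cartesian, so the base-changed source must be further cut down to $\Map_{\Ee_L^{p\textup{-cart}}}(\tilde Z,\tilde X)$. The nontrivial step is therefore to show that the lift $\tilde s'\colon\tilde Z\to\tilde X$ produced by $p$-cartesianness of $\tilde l$ is itself $p$-cartesian. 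This is precisely left cancellation for $p$-cartesian morphisms: $\tilde l$ is $p$-cartesian by hypothesis, and $\tilde l\circ\tilde s'=\tilde s$ is $p$-cartesian because $\tilde s\in\Ee_L^{p\textup{-cart}}$, forcing $\tilde s'$ to be $p$-cartesian as well. Beyond this single invocation of left cancellation, the argument is entirely formal.
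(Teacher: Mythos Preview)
Your proposal is correct and matches the paper's intended argument: the paper states this as an observation without a separate proof, merely pointing to left cancellation for $p$-cartesian morphisms as the one nontrivial ingredient, and your mapping-space computation is exactly the natural way to unpack that hint. The only point worth tightening is that in the second claim you should note that the cut-down must happen compatibly on \emph{both} sides of the comparison map---the forward direction (that $\tilde l\circ\tilde s'$ is $p$-cartesian when $\tilde s'$ is) uses closure of $p$-cartesian morphisms under composition, while the direction you emphasize uses left cancellation---but this is implicit in what you wrote.
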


\begin{theorem}
	\label{thm:Cocartesian_Fibrations_Of_Span_2Categories}
	Let $(\Cc,\Cc_L,\Cc_R)$ be an adequate triple and let $p\colon \Ee \to \Cc$ be a Beck--Chevalley fibration with respect to $(\Cc_L,\Cc_R)$. Assume that $p$ is a cartesian fibration, or that the morphisms in $\Cc_L$ are left-cancelable.\footnote{That is, given morphisms $f\colon X \to Y$ and $g\colon Y \to Z$ in $\Cc$, if both $g$ and $gf$ are in $\Cc_L$ then so is $f$.} Then the induced 2-functor
	\begin{equation}\label{eq:bspan-p}
		\bSpan(p)\colon \bSpan(\Ee,\Ee_L^{p\textup{-cart}},\Ee_R) \to \bSpan(\Cc,\Cc_L,\Cc_R)
	\end{equation}
	is a 1-cocartesian fibration.
\end{theorem}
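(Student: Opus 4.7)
The plan is to invoke Corollary~\ref{cor:Check_Cocartesian_2Functor_Underlying}, which reduces the statement to two separate verifications: first, that $\bSpan(p)$ is a homwise right fibration; and second, that its underlying $(\infty,1)$-functor $\Span(p)\colon \Span(\Ee,\Ee_L^{p\textup{-cart}},\Ee_R) \to \Span(\Cc,\Cc_L,\Cc_R)$ is a cocartesian fibration. The second of these is Barwick's theorem from \cite{Barwick2017SpectralMackey}, in its generalized form for adequate triples due to \cite{HHLN2022TwoVariable}; the hypothesis that $p$ is a cartesian fibration or that morphisms in $\Cc_L$ are left-cancelable is precisely what this $(\infty,1)$-categorical input requires. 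This is also the step where axioms (2) and (3) of Beck--Chevalley fibrations enter: axiom~(3) ensures that the natural candidate for a cocartesian lift of a span in $\Cc$, obtained by $p$-cocartesianly lifting the right leg and $p$-cartesianly lifting the left leg of the defining pullback square, genuinely has the universal property.

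For the homwise right fibration step, I would first recall the concrete description of Hom-categories in a span $2$-category: for objects $X, Y$ the category $\Hom_{\bSpan(\Cc,\Cc_L,\Cc_R)}(X, Y)$ has as objects spans $X \xleftarrow{l} U \xrightarrow{r} Y$ with $l \in \Cc_L$ and $r \in \Cc_R$, and as morphisms $(U,l,r) \to (U',l',r')$ the commuting diagrams $f\colon U \to U'$ satisfying $l' f = l$ and $r' f = r$. With this identification, the claim amounts to: given a $2$-morphism $f\colon (U, l, r) \to (U', l', r')$ downstairs and a lift $(\tilde U', \tilde l', \tilde r')$ of its target, there is an essentially unique lift of $f$ with this target. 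To construct one, use axiom~(1) of Beck--Chevalley fibrations to pick a $p$-cartesian lift $\tilde l\colon \tilde U \to \tilde X$ of $l = l' f \in \Cc_L$. Since $\tilde l'\colon \tilde U' \to \tilde X$ is $p$-cartesian over $l'$ and $l' f = l$, the $p$-cartesian property of $\tilde l'$ produces a unique $\tilde f\colon \tilde U \to \tilde U'$ over $f$ satisfying $\tilde l' \tilde f = \tilde l$, and setting $\tilde r \coloneqq \tilde r' \tilde f$ yields the desired lift; note that $\tilde l$ is $p$-cartesian by construction and $\tilde r$ lies over $r \in \Cc_R$, so the source truly belongs to $\Hom_{\bSpan(\Ee,\Ee_L^{p\textup{-cart}},\Ee_R)}(\tilde X, \tilde Y)$. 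Uniqueness is similarly forced: any competing lift must have a $p$-cartesian left leg over $l$, and the universal property of $p$-cartesian lifts pins down $\tilde f$ and hence $\tilde r$ up to contractible choice.

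The main obstacle I anticipate is not either individual step but rather the bookkeeping around how the $(\infty,1)$-categorical input of \cite{HHLN2022TwoVariable} integrates with the present conventions, particularly the appearance of two alternative hypotheses (cartesian fibration versus left-cancelability), which presumably reflect two different routes to verifying the pullback condition on $p$-cartesian legs in the cocartesian lift. The homwise right fibration step itself, once the description of Hom-categories in span $2$-categories is granted, is essentially formal and draws only on axiom~(1); the genuine content of the theorem is concentrated on the $(\infty,1)$-level and can largely be imported as a black box.
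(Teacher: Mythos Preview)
Your overall strategy matches the paper's: reduce via Corollary~\ref{cor:Check_Cocartesian_2Functor_Underlying} to checking that (a) $\bSpan(p)$ is a homwise right fibration and (b) the underlying $(\infty,1)$-functor is cocartesian, with (b) supplied by \cite[Theorem~3.1]{HHLN2022TwoVariable}.

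There is one inaccuracy worth flagging. You claim that the alternative hypothesis ($p$ cartesian, or $\Cc_L$ left-cancelable) is ``precisely what this $(\infty,1)$-categorical input requires.'' In the paper this is not so: the $(\infty,1)$-step uses only the Beck--Chevalley axioms together with Observation~\ref{obs:Beck_Chevalley_Cancellation}, and the extra hypothesis enters solely in the homwise step (Lemma~\ref{lemma:homwise-rfib}), where the paper lifts the connecting map $f$ of a $2$-morphism $p$-cartesianly. Since $f$ need not lie in $\Cc_L$, axiom~(1) of Definition~\ref{def:BC_Fibration} does not apply directly, and one needs either that $p$ is a cartesian fibration or that left-cancelability forces $f\in\Cc_L$.

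Your own homwise argument takes a slightly different (and in this setting better) route: you lift the left leg $l\in\Cc_L$ directly via axiom~(1) and then produce $\tilde f$ from the cartesian universal property of $\tilde l'$. As you correctly observe, this uses only axiom~(1). The upshot is that your homwise argument actually renders the extra hypothesis redundant for Beck--Chevalley fibrations; the paper's Lemma~\ref{lemma:homwise-rfib} is phrased for \emph{arbitrary} functors $p$ and so cannot appeal to axiom~(1), which is why the hypothesis appears there. So your proof is correct, but your diagnosis of where the hypothesis is doing work is off.
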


For the proof we will use:

\begin{lemma}\label{lemma:homwise-rfib}
    Let $(\Cc,\Cc_L,\Cc_R)$ be an adequate triple and let $p\colon\Ee\to\Cc$ be any functor. Assume at least one of the following conditions is satisfied:
    \begin{enumerate}[(1)]
        \item $p$ is a cartesian fibration;
        \item $\Cc_L$ is left-cancelable, and $p$ admits cartesian lifts of all morphisms in $\Cc_L$.
    \end{enumerate}
    Then the 2-functor $\bSpan(p)$ in $(\ref{eq:bspan-p})$ is a homwise right fibration.
\end{lemma}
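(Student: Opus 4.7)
The plan is to check directly that, under either hypothesis, the induced functor
\[
q_{\tilde{X},\tilde{Y}}\colon \Hom_{\bSpan(\Ee,\Ee_L^{p\textup{-}\cart},\Ee_R)}(\tilde{X},\tilde{Y}) \to \Hom_{\bSpan(\Cc,\Cc_L,\Cc_R)}(X,Y)
\]
is a right fibration, for every pair $\tilde{X}, \tilde{Y}$ of objects of $\Ee$ with images $X = p\tilde{X}$ and $Y = p\tilde{Y}$. I would use the characterization of right fibrations as those functors for which the space of lifts of any arrow with prescribed target is contractible.

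First, I would recall the explicit description of the Hom-category $\Hom_{\bSpan(\Cc,\Cc_L,\Cc_R)}(X,Y)$: it is the full subcategory of the pullback $\Cc^{/X} \times_\Cc \Cc^{/Y}$ on triples $(U, l\colon U \to X, r\colon U \to Y)$ with $l \in \Cc_L$ and $r \in \Cc_R$; its morphisms are triangles $f\colon U \to U'$ over $X$ and $Y$ in $\Cc$, with no restriction on $f$ itself. The analogous description applies upstairs to $(\Ee, \Ee_L^{p\textup{-}\cart}, \Ee_R)$, and $q_{\tilde{X},\tilde{Y}}$ is then simply post-composition with $p$ on each leg.

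Now fix a target $\sigma' = (\tilde{U}', \tilde{l}', \tilde{r}')$ upstairs and a morphism $\phi\colon (U, l, r) \to p(\sigma') = (U', l', r')$ downstairs, given by an arrow $f\colon U \to U'$ in $\Cc$ together with homotopies $l'f \simeq l$ and $r'f \simeq r$. I would analyse the space of lifts of $\phi$ to a morphism ending at $\sigma'$ upstairs by successively specifying data $(\tilde{U}, \tilde{l}, \tilde{f}, \tilde{r})$ in three stages: (i) since $l \in \Cc_L$, both hypotheses of the lemma guarantee that $p$ admits a $p$-cartesian lift of $l$ ending at $\tilde{X}$, and the space of such lifts $(\tilde{U}, \tilde{l}\colon \tilde{U} \to \tilde{X})$ is contractible by standard uniqueness of cartesian lifts; (ii) given $(\tilde{U}, \tilde{l})$, the $p$-cartesianness of $\tilde{l}'$ (characterised as a pullback square of mapping spaces) yields that the space of $\tilde{f}\colon \tilde{U} \to \tilde{U}'$ over $f$ equipped with a coherent equivalence $\tilde{l}'\tilde{f} \simeq \tilde{l}$ is contractible; (iii) the remaining choice of $\tilde{r}$ with a compatibility $\tilde{r}'\tilde{f} \simeq \tilde{r}$ is forced to be $\tilde{r}'\tilde{f}$, which automatically lies over $r'f \simeq r \in \Cc_R$ and hence in $\Ee_R$, so this last stage is again contractible.

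Chaining these three contractibility statements gives that the total space of lifts of $\phi$ is contractible, so $q_{\tilde{X},\tilde{Y}}$ is a right fibration and $\bSpan(p)$ is a homwise right fibration. The main subtlety is to set up the filtration so that each stage is genuinely a homotopy fiber of the next, in particular handling the coherence data for each homotopy $l'f \simeq l$, $r'f \simeq r$, $\tilde{l}'\tilde{f} \simeq \tilde{l}$, and $\tilde{r}'\tilde{f} \simeq \tilde{r}$ simultaneously. This requires a careful application of the pullback-square characterization of $p$-cartesian morphisms in the $\infty$-categorical setting, but is otherwise mechanical once the model for the Hom-category is in hand.
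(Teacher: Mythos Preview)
Your approach is correct and closely related to the paper's, but organized differently. The paper proceeds in two steps: first it exhibits a lift by choosing a $p$-cartesian lift $\tilde f$ of the connecting morphism $f$ and then composing with the given legs to obtain the new span; second, it shows that \emph{every} morphism in the source Hom-category is cartesian over the target, using that in any such morphism the map $\tilde f$ is $p$-cartesian by left-cancellation of $p$-cartesian morphisms (since $\tilde l\tilde f$ and $\tilde l$ are both $p$-cartesian). You instead show contractibility of the space of lifts directly, by first lifting the left leg $l$ and then invoking the $p$-cartesianness of the \emph{target} leg $\tilde l'$ (rather than of $\tilde f$) to pin down $\tilde f$ uniquely.

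One interesting byproduct of this reorganization: in the paper's existence step, lifting $f$ under hypothesis~(2) requires knowing $f\in\Cc_L$, which is precisely where the left-cancellation of $\Cc_L$ is used. Your argument lifts $l$ instead, which is always in $\Cc_L$, and never appeals to left-cancellation of $\Cc_L$; so your route in fact yields the lemma under the weaker assumption that $p$ merely admits cartesian lifts over $\Cc_L$. Conversely, the paper's ``existence $+$ all morphisms cartesian'' packaging sidesteps the homotopy-coherence bookkeeping of your three-stage filtration, which (as you correctly flag) is where the care is needed in your version.
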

\begin{proof}
     We have to show that for all objects $\tilde{X}$ and $\tilde{Y}$ of $\Ee$ the induced functor
	\[
		\Hom_{\bSpan(\Ee,\Ee_L^{p\textup{-cart}},\Ee_R)}(\tilde{X},\tilde{Y}) \xrightarrow{\bSpan(p)} \Hom_{\bSpan(\Cc,\Cc_L,\Cc_R)}(p\tilde{X},p\tilde{Y})
	\]
	is a right fibration. {By definition, the source of this functor is the full subcategory of $\Ee_{/\tilde{X}} \times_{\Ee} \Ee_{/\tilde{Y}}$ spanned by those objects $\tilde{X} \xleftarrow{\tilde{l}} \tilde{Z} \xrightarrow{\tilde{r}} \tilde{Y}$ with $\tilde{l} \in \Ee_L^{p\textup{-cart}}$ and $\tilde{r} \in \Ee_R$; we may similarly identify its target as a full subcategory of $\Cc_{/p\tilde{X}} \times_{\Cc} \Cc_{/p\tilde{Y}}$.} First observe that every morphism in the target admits \textit{some} lift to the source with specified codomain. Indeed, consider an object $\tilde{X} \xleftarrow{\tilde{l}} \tilde{Z} \xrightarrow{\tilde{r}} \tilde{Y}$ in the source, and consider a morphism in the target of the form
	\[
	\begin{tikzcd}[row sep=tiny, column sep=small]
		& Z' \dlar[swap]{l'} \drar{r'} \ar{dd}{f} \\
		p\tilde{X} && p\tilde{Y}. \\
		& p\tilde{Z} \ular{p\tilde{l}} \urar[swap]{p\tilde{r}}
	\end{tikzcd}
	\]
	Either of the two assumptions of the lemma imply that there exists a $p$-cartesian lift $\tilde{f}\colon \tilde{Z'} \to \tilde{Z}$: either because $p$ is a cartesian fibration, or because $f$ lies in $\Cc_L$ due to the left cancelation property. It then follows that the morphism $\tilde{l'} \coloneqq  \tilde{l} \circ \tilde{f}$ lies in $\Ee_L^{p\textup{-cart}}$ and that the morphism $\tilde{r'} \coloneqq  \tilde{r} \circ \tilde{f}$ lies in $\Ee_R$, and hence the resulting diagram
	\[
	\begin{tikzcd}[row sep=tiny, column sep=small]
		& \tilde{Z'} \dlar[swap]{\tilde{l'}} \drar{\tilde{r'}} \ar{dd}{\tilde{f}} \\
		\tilde{X} && \tilde{Y} \\
		& \tilde{Z}, \ular{\tilde{l}} \urar[swap]{\tilde{r}}
	\end{tikzcd}
	\]
	defines a morphism in $\Hom_{\bSpan(\Ee,\Ee_L^{p\textup{-cart}},\Ee_R)}(\tilde{X},\tilde{Y})$ which lifts the given morphism. It will now suffice to show that every morphism in $\Hom_{\bSpan(\Ee,\Ee_L^{p\textup{-cart}},\Ee_R)}(\tilde{X},\tilde{Y})$ is cartesian over $\Hom_{\bSpan(\Cc,\Cc_L,\Cc_R)}(p\tilde{X},p\tilde{Y})$. But this is an immediate consequence of the fact that for every such morphism
	\[
	\begin{tikzcd}[row sep=tiny, column sep=small]
		& \tilde{Z'} \dlar[swap]{\tilde{l'}} \drar{\tilde{r'}} \ar{dd}{\tilde{f}} \\
		\tilde{X} && \tilde{Y} \\
		& \tilde{Z}, \ular{\tilde{l}} \urar[swap]{\tilde{r}}
	\end{tikzcd}
	\]
	the morphism $\tilde{f}\colon \tilde{Z'} \to \tilde{Z}$ is $p$-cartesian by left-cancelation of $p$-cartesian morphisms. This finishes the proof.
\end{proof}

\begin{proof}[Proof of Theorem~\ref{thm:Cocartesian_Fibrations_Of_Span_2Categories}]
	We first show
    that the underlying functor of $(\infty,1)$-categories $\Span(p)\colon \Span(\Ee,\Ee_L^{p\textup{-cart}},\Ee_R) \to \Span(\Cc,\Cc_L,\Cc_R)$ is a cocartesian fibration. Indeed, given a morphism
	\[
		X \xleftarrow{l} Z \xrightarrow{r} Y
	\]
	in $\Span(\Cc,\Cc_L,\Cc_R)$ and a lift $\tilde{X}$ of $X$, we may lift it to a morphism
	\[
		\tilde{X} \xleftarrow{\tilde{l}} \tilde{Z} \xrightarrow{\tilde{r}} \tilde{Y}
	\]
	in $\Span(\Ee,\Ee_L^{p\textup{-cart}},\Ee_R)$, where $\tilde{l}$ is a $p$-cartesian lift of $l$ and where $\tilde{r}$ is a $p$-cocartesian lift of $r$. This morphism is then a $\Span(p)$-cocartesian lift by applying \cite[Theorem~3.1]{HHLN2022TwoVariable} to the functor $p\colon (\Ee,\Ee_L^{p\textup{-cart}},\Ee_R) \to (\Cc,\Cc_L,\Cc_R)$. The conditions of that theorem are satisfied due to $p$ being a Beck--Chevalley fibration, and using \Cref{obs:Beck_Chevalley_Cancellation}.

	By \Cref{cor:Check_Cocartesian_2Functor_Underlying}, it then only remains to show that the 2-functor $\bSpan(p)$ is also a homwise right fibration, which is an instance of the previous lemma.
\end{proof}

\begin{construction}
    We now enhance the above construction to a functor
    \begin{equation}\label{eq:bspan}
        \bSpan\colon \mathrm{Fib}^{\BC}_{\Cc_L,\Cc_R}(\Cc) \to \Cocart_1(\bSpan(\Cc,\Cc_L,\Cc_R)).
    \end{equation}
    Since a Beck--Chevalley fibration $p\colon \Ee \to \Cc$ becomes a morphism of adequate triples once equipping its source with the subcategories $\Ee_L^{p\textup{-cart}}$ and $\Ee_R$, we have an evident functor from $\mathrm{Fib}^{\BC}_{\Cc_L,\Cc_R}(\Cc)$ to $\AdTrip_{/(\Cc,\Cc_L,\Cc_R)}$. Composing this with the functor $\bSpan\colon \AdTrip \to \Cat_{(\infty,2)}$, this yields a functor
    \[
    \bSpan\colon \mathrm{Fib}^{\BC}_{\Cc_L,\Cc_R}(\Cc) \to (\Cat_{(\infty,2)})_{/\bSpan(\Cc,\Cc_L,\Cc_R)}.
    \]
    By \Cref{thm:Cocartesian_Fibrations_Of_Span_2Categories} this functor on objects lands in $\Cocart_1(\bSpan(\Cc,\Cc_L,\Cc_R))$. From the explicit description of the $\Span(p)$-cocartesian morphisms given in \Cref{thm:Cocartesian_Fibrations_Of_Span_2Categories}, one similarly observes that for every morphism $\Ee \to \Ee'$ of Beck--Chevalley fibrations over $(\Cc,\Cc_L,\Cc_R)$ the resulting 2-functor $\bSpan(\Ee,\Ee_L^{p\textup{-cart}},\Ee_R) \to \bSpan(\Ee',{\Ee'}_L^{p\textup{-cart}},\Ee'_R)$ is 1-cocartesian over $\bSpan(\Cc,\Cc_L,\Cc_R)$, i.e.~$\bSpan$ restricts to $(\ref{eq:bspan})$.
\end{construction}

\subsection{Universality of the unfurling construction}

We will now specialize the results from the previous subsection to {Beck--Chevalley fibrations obtained} via either cartesian or cocartesian unstraightening.

\begin{proposition}[{cf.\ \cite[Proposition~3.4.4 and 3.4.10]{MacPherson2022Bivariant}}]
\label{prop:BC_Fibration_From_Adjointable}
	Let $(\Cc,\Cc_L)$ and $(\Cc,\Cc_R)$ be span pairs.
	\begin{enumerate}[(1)]
		\item The cocartesian unstraightening equivalence $\Un^{\cc}\colon \Fun(\Cc,\Cat_{\infty}) \iso \Cocart(\Cc)$ restricts to an equivalence
		\[
		\Un^{\cc}\colon \Fun_{L\radj}(\Cc,\Cat_{\infty}) \iso \mathrm{Fib}^{\BC}_{(\Cc_L,\Cc)}(\Cc).
		\]
		\item The cartesian unstraightening equivalence $\Un^{\ct}\colon \Fun(\Cc\catop,\Cat_{\infty}) \iso \bCart(\Cc)$ restricts to an equivalence
		\[
		\Un^{\ct}\colon \Fun_{R\ladj}(\Cc\catop,\Cat_{\infty}) \iso \mathrm{Fib}^{\BC}_{(\Cc,\Cc_R)}(\Cc).
		\]
	\end{enumerate}
\end{proposition}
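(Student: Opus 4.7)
Since $\Un^{\cc}$ and $\Un^{\ct}$ are already equivalences of the ambient categories, the task reduces to matching the specified subcategories under these equivalences, both at the level of objects and of morphisms. The plan is to prove part~(1) in detail and then deduce part~(2) by passage to opposite categories.

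For part~(1), let $F\colon \Cc \to \Cat_\infty$ classify the cocartesian fibration $p\colon \Ee \to \Cc$. I will match the three conditions of \Cref{def:BC_Fibration} (with $\Cc_R = \Cc$) against those of \Cref{def:BC_Functor}: condition~(2) is automatic since $p$ is cocartesian; condition~(1)---the existence of $p$-cartesian lifts over $\Cc_L$---corresponds to the pushforward $F(l) = l^*$ admitting a right adjoint $l_*$ for every $l \in \Cc_L$, via the standard dictionary between cartesian lifts and right adjoints of pushforward functors in a cocartesian fibration; and condition~(3) will follow by tracing a cartesian lift of $l$ and a cocartesian lift of $g$ around the exchange square, which translates into the Beck--Chevalley transformation relating $F(h) \circ l_*$ and $k_* \circ F(g)$ being an equivalence. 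For morphisms, any map in $\Cocart(\Cc)$ already preserves all cocartesian morphisms, so the extra content of a morphism in $\mathrm{Fib}^{\BC}_{(\Cc_L, \Cc)}$ is preservation of $p$-cartesian morphisms over $\Cc_L$; for a natural transformation $\alpha\colon F \Rightarrow G$, this will correspond to vertical right adjointability of its naturality squares at $\Cc_L$ via the same dictionary applied to mate transformations. The main technical bookkeeping will lie in these last two steps, where one must match the mate of a naturality square with the obstruction to preserving cartesian lifts on the fibration side.

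For part~(2), I will use the identification of a cartesian fibration $p\colon \Ee \to \Cc$ with the cocartesian fibration $p\catop\colon \Ee\catop \to \Cc\catop$; correspondingly, the cartesian straightening $F\colon \Cc\catop \to \Cat_\infty$ of $p$ agrees with the cocartesian straightening of $p\catop$ postcomposed with $(-)\catop\colon \Cat_\infty \to \Cat_\infty$. Under this identification, the BC fibration conditions with respect to $(\Cc, \Cc_R)$ for $p$ become those with respect to $(\Cc_R\catop, \Cc\catop)$ for $p\catop$, since $p$-cocartesian lifts become $p\catop$-cartesian lifts and the exchange square of \Cref{def:BC_Fibration}(3) is symmetric under swapping the two variance types. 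On the functor side, $F(r)$ admits a left adjoint if and only if $F(r)\catop$ admits a right adjoint, so contravariantly left $R$-adjointable functors $\Cc\catop \to \Cat_\infty$ correspond under postcomposition with $(-)\catop$ to covariantly right $\Cc_R\catop$-adjointable functors on $\Cc\catop$. Applying part~(1) to the span pair $(\Cc\catop, \Cc_R\catop)$ then yields part~(2).
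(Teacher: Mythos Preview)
Your proposal is correct and follows essentially the same approach as the paper: prove part~(1) by matching condition~(2) of \Cref{def:BC_Fibration} as automatic, condition~(1) with the existence of right adjoints, and condition~(3) with the Beck--Chevalley condition, then handle morphisms via preservation of cartesian lifts, and deduce part~(2) by duality (the paper simply says ``entirely dual''). The one place where the paper is more precise than your sketch is condition~(3): it writes the square with the top map $p$-cartesian and the right map $p$-cocartesian fixed, factors the Beck--Chevalley map as the composite of the two comparison maps $k_! h^*\tilde X \to \tilde Y'$ and $\tilde Y' \to g^* f_!\tilde X$ (equivalences precisely when $\tilde k$ is cocartesian resp.\ $\tilde g$ is cartesian), and concludes the biconditional by 2-out-of-3---your phrase ``tracing a cartesian lift of $l$ and a cocartesian lift of $g$ around the exchange square'' gestures at this but does not quite capture the biconditional, and your expression ``$F(h)\circ l_*$ and $k_*\circ F(g)$'' does not type-check with the variable conventions of \Cref{def:BC_Functor}.
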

\begin{proof}
	We prove part (1); part (2) is entirely dual. Let $F\colon \Cc \to \Cat_{\infty}$ be a functor and let $p\colon \Ee \to \Cc$ denote its cocartesian unstraightening. We start by showing that $F$ is covariantly right $L$-adjointable if and only if $p$ is a Beck--Chevalley fibration with respect to $(\Cc,\Cc_L,\Cc)$. Since $p$ is a cocartesian fibration, condition (2) in \Cref{def:BC_Fibration} is automatic. Condition (1) in \Cref{def:BC_Fibration} corresponds to condition (1) in \Cref{def:BC_Functor}. In this case the square in condition (3) takes the form
	\[
	\begin{tikzcd}
		h^*\tilde{X} \arrow[r, "\tilde{h}"] \arrow[d, "\tilde{k}"'] & \tilde{X} \arrow[d, "\tilde{f}"] \\
		\tilde{Y}' \arrow[r, "\tilde{g}"] & f_!\tilde{X}.
	\end{tikzcd}
	\]
	The map $\tilde{k}$ induces a map $k_!h^*\tilde{X} \to \tilde{Y}'$ which is an equivalence if and only if $\tilde{k}$ is $p$-cocartesian, and similarly $\tilde{g}$ induces a map $\tilde{Y}' \to g^*f_!\tilde{X}$ which is an equivalence if and only if $\tilde{g}$ is $p$-cartesian. Since the composite of these two maps is the Beck--Chevalley map $k_!h^*\tilde{X} \to g^*f_!\tilde{X}$, it follows by 2-out-of-3 that condition (3) from \Cref{def:BC_Fibration} is equivalent to condition (2) from \Cref{def:BC_Functor}. We conclude that $F$ is covariantly right $L$-adjointable if and only if $p$ is a Beck--Chevalley fibration.

	To finish the proof, we now observe that by definition a natural transformation $F \Rightarrow G$ between two right $L$-adjointable functors $\Cc \to \Cat_{\infty}$ is right $L$-adjointable if and only if the map over $\Cc$ between their cocartesian unstraightenings (which always preserves cocartesian morphisms) preserves cartesian morphisms over $\Cc_L$, which is equivalent to being a morphism of Beck--Chevalley fibrations.
\end{proof}

We are finally ready to introduce the 2-categorical enhancement of Barwick's unfurling construction:

\begin{definition}[Unfurling]\label{def:unfurling}
	Let $(\Cc,\Cc_L)$ and $(\Cc,\Cc_R)$ be span pairs.
	\begin{enumerate}[(1)]
		\item We define the \textit{covariant unfurling construction} as the following composite:
		\begin{align*}
			\Unf^{\cov}\colon \Fun_{L\radj}(\Cc,\Cat_{\infty}) &\xrightarrow[\raisebox{2.5pt}{$\scriptstyle\sim$}]{\Un^{\cc}} \mathrm{Fib}^{\BC}_{(\Cc_L,\Cc)}(\Cc) \\
			&\xrightarrow{\bSpan} \Cocart(\bSpan(\Cc,\Cc_L,\Cc)) \\
			&\xrightarrow[\raisebox{2.5pt}{$\scriptstyle\sim$}]{\Str^{\cc}} \Fun_2(\bSpan(\Cc,\Cc_L,\Cc),\bCat_{\infty}).
		\end{align*}
		\item We define the \textit{contravariant unfurling construction} as the following composite:
		\begin{align*}
			\Unf^{\contra}\colon \Fun_{R\ladj}(\Cc\catop,\Cat_{\infty}) &\xrightarrow[\raisebox{2.5pt}{$\scriptstyle\sim$}]{\Un^{\ct}} \mathrm{Fib}^{\BC}_{(\Cc,\Cc_R)}(\Cc) \\
			&\xrightarrow{\bSpan} \Cocart(\bSpan(\Cc,\Cc,\Cc_R)) \\
			&\xrightarrow[\raisebox{2.5pt}{$\scriptstyle\sim$}]{\Str^{\cc}} \Fun_2(\bSpan(\Cc,\Cc,\Cc_R),\bCat_{\infty}).
		\end{align*}
	\end{enumerate}
\end{definition}

\begin{theorem}[Universality of unfurling]
\label{thm:Universality_Unfurling}
	Let $(\Cc,\Cc_L)$ and $(\Cc,\Cc_R)$ be span pairs.
	\begin{enumerate}[(1)]
		\item The covariant unfurling construction provides an inverse to the restriction functor
		\[
			\Fun_2(\bSpan(\Cc,\Cc_L,\Cc),\bCat_{\infty}) \iso \Fun_{L\radj}(\Cc,\Cat_{\infty}).
		\]
		\item The contravariant unfurling construction provides an inverse to the restriction functor
		\[
			\Fun_2(\bSpan(\Cc,\Cc,\Cc_R),\bCat_{\infty}) \iso \Fun_{R\ladj}(\Cc\catop,\Cat_{\infty}).
		\]
	\end{enumerate}
\end{theorem}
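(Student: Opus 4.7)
The plan is to prove part (1); part (2) then follows by the same argument applied dually, using the equivalence $\bSpan(\Cc,\Cc,\Cc_R)\catop \simeq \bSpan(\Cc,\Cc_R,\Cc)$ that appears in the proof of \Cref{cor:other-univ-properties}. By the universal-property theorem recalled just before that corollary (applied with $\Dd = \bCat_{\infty}$), restriction along $\Cc \hookrightarrow \bSpan(\Cc,\Cc_L,\Cc)$ is already an equivalence $\Fun_2(\bSpan(\Cc,\Cc_L,\Cc),\bCat_{\infty}) \iso \Fun_{L\radj}(\Cc,\Cat_{\infty})$, so to identify $\Unf^{\cov}$ with its inverse it is enough to exhibit a natural equivalence $\mathrm{res}\circ\Unf^{\cov}\simeq \id$ on $\Fun_{L\radj}(\Cc,\Cat_{\infty})$.

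Unwinding \Cref{def:unfurling} and using the naturality of $(\infty,2)$-categorical straightening under pullback, together with its compatibility with classical $(\infty,1)$-categorical straightening on $(\infty,1)$-categorical bases (see the theorem recalled in \Cref{subsec:2-categorical-unstraightening}), this reduces to the following single claim: for each right $L$-adjointable $F\colon\Cc\to\Cat_\infty$ with cocartesian unstraightening $p\colon\Ee\to\Cc$, the pullback of the $1$-cocartesian fibration $\bSpan(p)\colon \bSpan(\Ee,\Ee_L^{p\textup{-}\cart},\Ee)\to \bSpan(\Cc,\Cc_L,\Cc)$ along the inclusion $\Cc\hookrightarrow \bSpan(\Cc,\Cc_L,\Cc)$ is equivalent, naturally in $p$, to $p$ itself as a cocartesian fibration.

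To verify this claim I would analyze the pullback by hand. A $1$-morphism in it from $\tilde X$ to $\tilde Y$ is a span $\tilde X\xleftarrow{\tilde l}\tilde Z\xrightarrow{\tilde r}\tilde Y$ with $\tilde l\in\Ee_L^{p\textup{-}\cart}$ and $p(\tilde l)=\id$, but being $p$-cartesian over an identity forces $\tilde l$ to be an equivalence, reducing the datum to a single morphism $\tilde X\to\tilde Y$ in $\Ee$. The analogous analysis for $2$-morphisms, whose image $2$-cell in $\Cc$ is the identity and whose connecting map of apexes is therefore a $p$-cartesian lift of an identity and hence an equivalence, shows that $\Hom$-categories in the pullback agree with the $\Hom$-spaces in $\Ee$. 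Finally, the explicit description of $\bSpan(p)$-cocartesian morphisms given in the proof of \Cref{thm:Cocartesian_Fibrations_Of_Span_2Categories} matches them, under this identification, with the $p$-cocartesian morphisms of $\Ee$.

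The main remaining obstacle is to promote this pointwise identification to a genuinely natural equivalence of functors into $\Cocart(\Cc)$, which is conceptually clear but requires some bookkeeping. The natural route is to use the functoriality of the $\bSpan$-construction recorded in the previous subsection and to observe that each of the identifications above is manifestly compatible with any morphism of Beck--Chevalley fibrations over $(\Cc,\Cc_L,\Cc)$, so that they glue to a natural transformation from $\mathrm{res}\circ\Unf^{\cov}$ to the identity which is a pointwise equivalence.
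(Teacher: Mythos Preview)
Your argument for part~(1) is essentially the paper's: reduce to showing that $\Unf^{\cov}$ is a section by pulling back $\bSpan(p)$ along $\Cc\hookrightarrow\bSpan(\Cc,\Cc_L,\Cc)$ and observing that the backwards leg $\tilde l$ of any span in the pullback is $p$-cartesian over an equivalence and hence invertible, so the pullback is $p$ itself. That part is fine.

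The gap is your treatment of part~(2). It does \emph{not} follow ``by the same argument applied dually,'' and the equivalence $\bSpan(\Cc,\Cc,\Cc_R)\catop\simeq\bSpan(\Cc,\Cc_R,\Cc)$ does not help. The contravariant unfurling is the composite $\Str^{\cc}\circ\bSpan\circ\Un^{\ct}$: you start from the \emph{cartesian} unstraightening $p\colon\Ee\to\Cc$ of $F\colon\Cc\catop\to\Cat_\infty$, but you must end by \emph{cocartesian} straightening over $\bSpan(\Cc,\Cc,\Cc_R)$. When you pull back along $\Cc\catop\simeq\bSpan(\Cc,\Cc,\Cc^{\simeq})\hookrightarrow\bSpan(\Cc,\Cc,\Cc_R)$ you get
\[
\Span(\Ee,\Ee^{p\textup{-}\cart},\Ee^{\mathrm{fw}})\longrightarrow\Cc\catop,
\]
where $\Ee^{\mathrm{fw}}$ consists of morphisms lying over equivalences in $\Cc$. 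The forward legs here are fiberwise but typically \emph{not} invertible, so unlike in part~(1) the source does not collapse to $\Ee$ (or $\Ee\catop$). What you need is that this cocartesian fibration over $\Cc\catop$ is precisely the \emph{dual} of the cartesian fibration $p$, i.e.\ that its cocartesian straightening recovers $F$; this is the main result of Barwick--Glasman--Nardin on dualizing cartesian and cocartesian fibrations, and the paper invokes it explicitly. Without that input (or an independent argument to the same effect), part~(2) is not proved.
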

\begin{proof}
	By the universal properties of the inclusions $\Cc \hookrightarrow \bSpan(\Cc,\Cc_L,\Cc)$ and $\Cc\catop \hookrightarrow \bSpan(\Cc,\Cc,\Cc_R)$ we know that these restriction functors are equivalences, hence it will suffice to show that the two unfurling constructions define sections.

	In the covariant case, this amounts to showing that for a Beck--Chevalley fibration $p\colon \Ee \to \Cc$ with respect to $(\Cc_L,\Cc)$, the base change of the 1-cocartesian fibration $\bSpan(p)\colon \bSpan(\Ee,\Ee_L^{p\textup{-cart}},\Ee) \to \bSpan(\Cc,\Cc_L,\Cc)$ along the inclusion $\Cc \simeq \bSpan(\Cc,\Cc^{\simeq},\Cc)\hookrightarrow \bSpan(\Cc,\Cc_L,\Cc)$ is the cocartesian fibration $p\colon \Ee \to \Cc$. But this is clear: this base change is given by the functor
	\[
		\bSpan(p)\colon \bSpan(\Ee,\Ee_L^{p\textup{-cart}} \cap p^{-1}(\Cc^{\simeq}),\Ee) \to \bSpan(\Cc,\Cc^{\simeq},\Cc) \simeq \Cc,
	\]
	and the source is naturally equivalent to $\Ee$ since every $p$-cartesian morphism in $\Ee$ whose image in $\Cc$ is an equivalence is already an equivalence.

	In the contravariant case, we start with a cartesian fibration $p\colon \Ee \to \Cc$ which is the cartesian unstraightening of some functor $F\colon \Cc\catop \to \Cat_{\infty}$. The base change of the resulting 1-cocartesian fibration $\bSpan(p)\colon \bSpan(\Ee,\Ee^{p\textup{-}\cart},\Ee_R) \to \Span(\Cc,\Cc,\Cc_R)$ along the inclusion $\Cc\catop \simeq \bSpan(\Cc,\Cc,\Cc^{\simeq}) \hookrightarrow \bSpan(\Cc,\Cc,\Cc_R)$ is given by the cocartesian fibration
	\begin{equation}\label{eq:this-cocartesian-fibration}
		\Span(p)\colon \Span(\Ee,\Ee^{p\textup{-}\cart},\Ee^{\mathrm{fw}}) \to \Span(\Cc,\Cc,\Cc^{\simeq}) \simeq \Cc\catop,
	\end{equation}
	where $\Ee^{\mathrm{fw}} \subseteq \Ee$ denotes the wide subcategory spanned by those morphisms in $\Ee$ whose image in $\Cc$ is invertible. By the main result of \cite{BGN2018Dualizing}, the cocartesian unstraightening $\Cc\catop \to \Cat_{\infty}$ of $(\ref{eq:this-cocartesian-fibration})$ is naturally equivalent to $F$.
	This finishes the proof.
\end{proof}

\section{Yet another unfurling construction}
So far, we have discussed how to make the $\bSpan$-extensions of contravariantly left $R$-adjointable functors and covariantly right $L$-adjointable functors explicit. In this section, we will explain how to similarly get a concrete grip on the extension of a contravariantly right $R$-adjointable functor, and we apply this to compare the two competing constructions of `cartesian normed structures,' due to Nardin--Shah and Cnossen--Haugseng--Lenz--Linskens.

\subsection{Adding right adjoints to a contravariant functor}
Recall from Corollary~\ref{cor:other-univ-properties} that $\Cc^{\op}\hookrightarrow\bSpan(\Cc,\Cc,\Cc_R)^{\co}$ is the initial contravariant right $R$-adjointable functor. The following proposition shows that this universal property is again implemented by a suitable unfurling construction:

\begin{proposition}\label{prop:subtle}
    Let $\phi\colon\Cc^{\op}\to\Cat_\infty$ be right $R$-adjointable and let $p\colon\Ee\to\Cc$ be the cartesian fibration for $\phi$. Write $\Ee_R^{p\textup{-cart}}\subset\Ee$ for the subcategory of cartesian lifts of maps in $\Cc_R$. Then $(\Ee,\Ee_R^{p\textup{-cart}})$ is a span pair, $(\Ee,\Ee_R^{p\textup{-cart}})\to(\Cc,\Cc_R)$ is a map of span pairs, and {the 2-functor}
    \begin{equation}\label{eq:subtle-fib}
        \bSpan(p)^{\co}\colon \bSpan(\Ee,\Ee_R^{p\textup{-cart}},\Ee)^{\co}\to\bSpan(\Cc,\Cc_R,\Cc)^{\co}
    \end{equation}
    is a $1$-cartesian fibration, whose cartesian straightening is the unique extension of $\phi$ to $\bSpan(\Cc,\Cc_R,\Cc)^{\coop} = \bSpan(\Cc,\Cc,\Cc_R)^{\co}$.
\end{proposition}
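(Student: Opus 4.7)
The plan is to verify, in turn, that $(\Ee,\Ee_R^{p\text{-cart}})$ is a span pair with $p$ a map of span pairs, that $\bSpan(p)^{\co}$ is a $1$-cartesian fibration, and that its cartesian straightening is the universal extension of $\phi$ from \Cref{cor:other-univ-properties}(3).

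The first two assertions are routine: $(\Cc,\Cc_R)$ being a span pair provides the required pullback cospans in $\Cc$, and we lift them to $\Ee$ via $p$-cartesian lifts, using the stability of cartesian morphisms under pullback; functoriality in $p$ then yields the map of span pairs.

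For the $1$-cartesian fibration property I verify the two defining conditions separately. The homwise left fibration condition follows from \Cref{lemma:homwise-rfib}, applied to the adequate-triple map $p\colon (\Ee, \Ee_R^{p\text{-cart}}, \Ee) \to (\Cc, \Cc_R, \Cc)$ (whose hypotheses hold because $p$ is a cartesian fibration), together with the observation that applying $(-)^{\co}$ turns homwise right fibrations into homwise left fibrations. The substantive step is producing enough $2$-cartesian $1$-lifts. Since $(-)^{\co}$ only opposes Hom-categories while preserving pullback squares, a $1$-morphism is $2$-cartesian in $\bSpan(p)^{\co}$ if and only if it is $2$-cartesian in $\bSpan(p)$, so it suffices to work with $\bSpan(p)$. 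Given $g = (X \xleftarrow{l} Z \xrightarrow{r} Y)$ in $\bSpan(\Cc, \Cc_R, \Cc)$ and a lift $\tilde Y$ of $Y$, I build a candidate lift by choosing a $p$-cartesian lift $\tilde r_0\colon r^*\tilde Y \to \tilde Y$ of $r$; invoking right $R$-adjointability to pick a right adjoint $l_*$ of $l^* = \phi(l)$ and setting $\tilde X := l_*(r^*\tilde Y)$; choosing a $p$-cartesian lift $\tilde l_0\colon l^* l_* r^*\tilde Y \to \tilde X$ of $l$; and using the counit $\epsilon\colon l^* l_* r^*\tilde Y \to r^*\tilde Y$ of $l^*\dashv l_*$ to form
\[
\tilde g = \bigl(\tilde X \xleftarrow{\tilde l_0} l^* l_* r^*\tilde Y \xrightarrow{\tilde r_0 \circ \epsilon} \tilde Y\bigr),
\]
whose left leg is in $\Ee_R^{p\text{-cart}}$ by construction. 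The main obstacle is verifying that $\tilde g$ is $2$-cartesian: unfolding the Hom-categories of $\bSpan$ as (full subcategories of) iterated slice categories in $\Ee$, the condition reduces to a combination of the cartesian universal properties of $\tilde r_0$ and $\tilde l_0$ together with the adjunction equivalence $\Map(l^*(-), -) \simeq \Map(-, l_*(-))$---this is where the right $R$-adjointability of $\phi$ enters essentially.

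Finally, the uniqueness part of \Cref{cor:other-univ-properties}(3) reduces the identification of the cartesian straightening with the universal extension to showing that its restriction along $\Cc\catop \hookrightarrow \bSpan(\Cc, \Cc, \Cc_R)^{\co}$ is $\phi$. By naturality of straightening, this corresponds to the base change of $\bSpan(p)^{\co}$ along the dual covariant inclusion $\Cc \simeq \bSpan(\Cc, \Cc^{\simeq}, \Cc) \hookrightarrow \bSpan(\Cc, \Cc_R, \Cc)^{\co}$. Arguing as in the covariant half of the proof of \Cref{thm:Universality_Unfurling}, any $p$-cartesian morphism over an identity is already an equivalence, so the base change collapses to $p\colon \Ee \to \Cc$ itself, whose cartesian straightening is $\phi$ by construction.
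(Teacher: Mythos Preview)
Your approach differs substantially from the paper's. The paper does not construct cartesian lifts directly; instead it first reduces (via \Cref{cor:Check_Cocartesian_2Functor_Underlying}${}^{\coop}$) to showing the underlying $(\infty,1)$-functor is cartesian, and then uses an \emph{embedding trick}: it embeds $\phi$ into a pointwise-presheaf enlargement $\hat\phi$ for which every $f^*$ also has a \emph{left} adjoint, so that $\hat p\colon\hat\Ee\to\Cc$ is a bicartesian fibration. For $\hat p$ one can invoke the already-established cocartesian result (\Cref{thm:Cocartesian_Fibrations_Of_Span_2Categories}) and then observe that the classifying functor of $\Span(\hat p)$ lands in left adjoints, hence $\Span(\hat p)$ is also cartesian; finally one checks that the full subcategory $\Span(\Ee,\Ee_R^{p\text{-cart}},\Ee)\subset\Span(\hat\Ee,\hat\Ee_R^{\hat p\text{-cart}},\hat\Ee)$ is closed under cartesian pullback. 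This buys modularity: all the hard fibration-theoretic work is concentrated in the cocartesian theorem, and no new cartesianness verification is needed.

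Your direct construction of the lift $\tilde g$ via the counit is the right candidate, but the verification you sketch has a gap. You say the $2$-cartesianness ``reduces to a combination of the cartesian universal properties of $\tilde r_0$ and $\tilde l_0$ together with the adjunction equivalence $\Map(l^*(-),-)\simeq\Map(-,l_*(-))$.'' This is not enough: to check cartesianness one must compare with composites $\tilde g\circ\tilde h$ for arbitrary spans $\tilde h$, and composition in $\Span$ is by pullback. Concretely, for a span $W\xleftarrow{a}V\xrightarrow{b}X$ one ends up needing $b^*l_*\simeq l'_*b'^*$ for the base-changed square, i.e.\ the Beck--Chevalley condition, not merely the existence of $l_*$. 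You do invoke ``right $R$-adjointability,'' which in the paper's terminology includes Beck--Chevalley, but your proof sketch only names the bare adjunction; the place where Beck--Chevalley actually enters is precisely this composition step, and it should be made explicit. With that addition your argument can be made to work (and is arguably more elementary than the paper's), but as written the key step is asserted rather than shown.
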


The proof will rely on a cheap trick and the following lemma:

\begin{lemma}
	Let $\phi\colon\Cc^{\op}\to\Cat_\infty$ be right $R$-adjointable. Then there exists a pointwise fully faithful {right $R$-adjointable transformation} $\phi\hookrightarrow\hat\phi$ of right $R$-adjointable functors such that for every $f\colon A\to B$ in $\Cc$ the functor $f^*\colon\hat\phi(B)\to\hat\phi(A)$ admits a \emph{left} adjoint $f_!$.
	\begin{proof}
		We will instead show how a \emph{left} $R$-adjointable functor $\phi$ embeds into a $\hat\phi$ such that all restrictions have \emph{right} adjoints. The actual lemma will follow by applying this to $(-)^{\op}\circ\phi$ and then {passing to opposite categories again}.

		We define $\hat\phi(A)\coloneqq\PSh(\phi(A))$, with functoriality induced from $\phi$ via left Kan extension. The Yoneda embeddings then define the required fully faithful embedding $\phi\hookrightarrow\hat\phi$ by \cite[Theorem~8.1]{HHLN2022TwoVariable} or \cite[Theorem~2.4]{Ramzi-Yoneda}. The existence of right adjoints is then clear, while $R$-adjointability follows from $2$-functoriality of $\PSh(-)$.
	\end{proof}
\end{lemma}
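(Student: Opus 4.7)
The natural candidate for $\hat\phi$ comes from the presheaf construction, which freely adjoins colimits and so, in particular, left adjoints between the resulting presentable categories. Following a dualization, I apply the construction to $(-)^{\op}\circ\phi$ and re-op at the end, reducing the problem to the following dual form: for a contravariantly left $R$-adjointable $\psi\colon\Cc\catop\to\Cat_\infty$, construct a pointwise fully faithful left $R$-adjointable embedding $\psi\hookrightarrow\hat\psi$ into a left $R$-adjointable $\hat\psi$ such that every restriction $\hat\psi(f)$ admits a \emph{right} adjoint. I would then set $\hat\psi\coloneqq\PSh\circ\psi$, viewing $\PSh\colon\Cat_\infty\to\Cat_\infty$ as the $(\infty,2)$-functor that sends $G\colon X\to Y$ to the left Kan extension $G_!\colon\PSh(X)\to\PSh(Y)$. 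The Yoneda embeddings assemble into a pointwise fully faithful natural transformation $y\colon\id\Rightarrow\PSh$, providing the desired embedding $\psi\hookrightarrow\hat\psi$.

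The key property is now automatic: for every $f\colon A\to B$ in $\Cc$, the functor $\hat\psi(f)=\psi(f)_!$ has the restriction $\psi(f)^*$ as its right adjoint. For left $R$-adjointability of $\hat\psi$, the crucial input is that $\PSh$, as an $(\infty,2)$-functor, preserves adjunctions: from a given $\psi(r)^L\dashv\psi(r)$ (for $r\in\Cc_R$) witnessing left adjointability of $\psi$, applying $\PSh$ produces an adjunction $(\psi(r)^L)_!\dashv\psi(r)_!=\hat\psi(r)$, yielding the required left adjoint. The Beck--Chevalley equivalences for $\hat\psi$ at pullback squares are then the images under $\PSh$ of the corresponding equivalences for $\psi$. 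Left $R$-adjointability of $y$ itself reduces to the observation that the mate of the Yoneda naturality square at $\psi(r)$ is another Yoneda naturality square, now at $\psi(r)^L$, and hence again an equivalence.

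The main technical obstacle is assembling $\PSh$ into a genuine $(\infty,2)$-functor $\Cat_\infty\to\Cat_\infty$ and the Yoneda embedding into a coherent $(\infty,2)$-natural transformation, so that the adjunction-preservation and mate manipulations above are truly licensed. I would invoke \cite[Theorem~8.1]{HHLN2022TwoVariable} or \cite[Theorem~2.4]{Ramzi-Yoneda} for exactly this; once these coherence statements are available, the remainder of the proof is formal manipulation of adjunctions and equivalences.
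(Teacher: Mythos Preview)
Your proposal is correct and follows essentially the same route as the paper: reduce via $(-)^{\op}$ to the left-adjointable case, embed into $\PSh\circ\psi$ with functoriality by left Kan extension, and use the Yoneda embedding as the natural transformation, invoking the same references for its coherent naturality. You even spell out one point the paper leaves implicit, namely the $R$-adjointability of the Yoneda transformation itself.
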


\begin{proof}[Proof of Proposition~\ref{prop:subtle}]
    Note that $(\Ee,\Ee_R^{p\textup{-cart}},\Ee)\to(\Cc,\Cc_R,\Cc)$ is a map of adequate triples by \cite[Proposition~2.6]{HHLN2022TwoVariable}. The only non-trivial statement that remains is then that $(\ref{eq:subtle-fib})$ is indeed a $1$-cartesian fibration.

    By Lemma~\ref{lemma:homwise-rfib}${}^{\co}$, $(\ref{eq:subtle-fib})$ induces \emph{left} fibrations on hom categories, so {by \Cref{cor:Check_Cocartesian_2Functor_Underlying}${}^{\coop}$} we only have to show that the underlying functor $\Span(\Ee,\Ee_R^{p\textup{-cart}},\Ee)\to\Span(\Cc,\Cc_R,\Cc)$ of $(\infty,1)$-categories is a cartesian fibration.

    For this, we first consider the special case that all restricitions $r^*\colon\phi(B)\to\phi(A)$ also admit left adjoints (no Beck--Chevalley condition required). In this case $\Ee\to\Cc$ is also a \emph{cocartesian} fibration, and as such it is classified by the functor $\Cc\to\Cat_\infty$ obtained from $\phi$ by passing to left adjoints. Observe that this is still right $R$-adjointable, since the total mate of a Beck--Chevalley map in a commutative square is the Beck--Chevalley map of the corresponding adjoints. We thus see that $p$ is a Beck--Chevalley fibration with respect to $(\Cc_R,\Cc)$ by \Cref{prop:BC_Fibration_From_Adjointable}, hence it follows from Theorem~\ref{thm:Cocartesian_Fibrations_Of_Span_2Categories} that $\Span(p)\colon\Span(\Ee,\Ee_R^{p\textup{-cart}},\Ee)\to\Span(\Cc,\Cc_R,\Cc)$ is a \emph{cocartesian} fibration, and the functor classifying it sends a span
    \[
        X \xleftarrow{\;f\;} Y\xrightarrow{\;g\;} Z
    \]
    to $g_!f^*$. As this is a left adjoint (as a composition of left adjoints), we see that $\Span(p)$ is also cartesian, as claimed.

    In general, we use the lemma to obtain an inclusion $\phi\hookrightarrow\hat\phi$ such that all restrictions for $\hat\phi$ have left adjoints. If we write $q\colon\hat\Ee\to\Cc$ for the cartesian unstraightening of $\hat\phi$, then we have an inclusion $\Ee\hookrightarrow\hat\Ee$ that preserves cartesian edges, i.e.~it restricts to $\Ee_R^{p\textup{-cart}}\to\hat\Ee_R^{p\textup{-cart}}$. Note that this is a map of adequate triples as the proof of \cite[Proposition~2.6]{HHLN2022TwoVariable} shows that a square
    \[
        \begin{tikzcd}
            A'\arrow[r, "f'"]\arrow[d] & B'\arrow[d]\\
            A\arrow[r, "f"'] & B
        \end{tikzcd}
    \]
    in $\Ee$ with $f\in\Ee_R^{p\textup{-cart}}$ is a pullback in $\Ee$ or $\hat\Ee$ if and only if its image in $\Cc$ is a pullback and $f'\in\Ee_R^{p\textup{-cart}}$.

    We may therefore view $\Span(\Ee,\Ee_R^{p\textup{-cart}},\Ee)$ as a subcategory of $\Span(\hat\Ee,\hat\Ee_R^{p\textup{-cart}},\hat\Ee)$. Note that this is in fact a full subcategory, as $\Ee$ contains any edge of $\hat\Ee_R^{p\textup{-cart}}$ with target in $\Ee$. To complete the proof it therefore suffices to show that also $\Span(\Ee,\Ee_R^{p\textup{-cart}},\Ee)\subset\Span(\hat\Ee,\hat\Ee_R^{p\textup{-cart}},\hat\Ee)$ is closed under cartesian pullback. This may be checked separately over $\Cc$ and $\Cc_R^{\op}$, where this translates to naturality of $\phi\hookrightarrow\hat\phi$ and the Beck--Chevalley condition, respectively.
\end{proof}

\subsection{Cartesian normed structures} As promised, we can now use the uniqueness part of the universal property to compare some competing constructions in parametrized higher algebra. We begin with the following observation:

\begin{construction}\label{constr:nardin-shah-unfurl}
    Let $\phi\colon\Cc^{\op}\to\Cat_\infty$ be right $R$-adjointable. Then the composite $(-)^{\op}\circ\phi\colon\Cc^{\op}\to\Cat_{\infty}$ is \emph{left} $R$-adjointable, and therefore we can extend it to $\bSpan(\Cc,\Cc,\Cc_R)$ via the contravariant unfurling construction  (Definition~\ref{def:unfurling}). Postcomposing with $(-)^{\op}\colon\bCat_\infty\to\bCat_\infty^{\co}$, we obtain a $2$-functor $\bSpan(\Cc,\Cc,\Cc_R)\to\bCat^{\co}$, which we may equivalently view as $\bSpan(\Cc,\Cc,\Cc_R)^{\co}\to\bCat_\infty$.
\end{construction}

\begin{corollary}
    This $2$-functor $\bSpan(\Cc,\Cc,\Cc_R)^{\co}\to\bCat_\infty$ classifies the $1$-cartesian fibration \[\bSpan(\Ee,\Ee_R^{p\textup{-cart}},\Ee)^{\co}\to\bSpan(\Cc,\Cc_R,\Cc)^{\co}\] from Proposition~\ref{prop:subtle}. In particular, its underlying functor $\bSpan(\Cc,\Cc,\Cc_R)\to\Cat_\infty$ classifies the cartesian fibration $\Span(\Ee,\Ee_R^{p\textup{-cart}},\Ee)\to\Span(\Cc,\Cc_R,\Cc)$.
    \begin{proof}
        This is clear from the universal property as both restrict to $\phi$ on $\Cc^{\op}$.
    \end{proof}
\end{corollary}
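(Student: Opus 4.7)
The plan is to appeal to the uniqueness half of the universal property from \Cref{cor:other-univ-properties}(3): since the inclusion $\Cc\catop \hookrightarrow \bSpan(\Cc,\Cc,\Cc_R)\catco$ is the initial contravariantly right $R$-adjointable functor, any two $2$-functors $\bSpan(\Cc,\Cc,\Cc_R)\catco \to \bCat_\infty$ whose restriction along this inclusion agrees with $\phi$ are necessarily equivalent. It therefore suffices to verify this common restriction property for both candidate $2$-functors, and then to pass to underlying $(\infty,1)$-categories for the ``in particular'' assertion.

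For the cartesian straightening of $\bSpan(p)\catco$ there is nothing left to do: \Cref{prop:subtle} has already identified it as \emph{the} unique extension of $\phi$. For the $2$-functor from \Cref{constr:nardin-shah-unfurl}, I would simply unwind the definition. By \Cref{thm:Universality_Unfurling}(2), the contravariant unfurling of the left $R$-adjointable functor $(-)\catop\circ\phi$ restricts back to $(-)\catop\circ\phi$ along $\Cc\catop \hookrightarrow \bSpan(\Cc,\Cc,\Cc_R)$; postcomposing with $(-)\catop\colon \bCat_\infty \to \bCat_\infty\catco$ then converts this into $\phi$ itself, as required.

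The ``in particular'' statement follows by passing to underlying $(\infty,1)$-categories, using that the $(\infty,2)$-categorical cartesian straightening of \Cref{subsec:2-categorical-unstraightening} restricts to the classical cartesian straightening on the subcategory of $1$-cartesian fibrations. Since the argument is entirely formal, I do not anticipate any real obstacle; the only care required is in bookkeeping the various $\op$/$\co$ dualities, and in checking that the standard inclusion $\Cc\catop \hookrightarrow \bSpan(\Cc,\Cc,\Cc_R)\catco$ through which both sides are extensions is literally the same one.
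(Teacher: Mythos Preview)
Your proposal is correct and follows exactly the paper's approach: both $2$-functors restrict to $\phi$ along $\Cc\catop\hookrightarrow\bSpan(\Cc,\Cc,\Cc_R)\catco$, so they agree by the uniqueness part of the universal property. The paper's proof is the one-line version of what you wrote; your additional unwinding of \Cref{constr:nardin-shah-unfurl} and the passage to underlying $(\infty,1)$-categories is accurate bookkeeping.
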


\begin{example}
    Let $T$ be a small category such that its finite coproduct completion $\mathbb F[T]$ has pullbacks (i.e.~$T$ is `orbital'). Product preserving functors $\Cc\colon \mathbb F[T]^{\op}\to\Cat_\infty$ are called \emph{$T$-$\infty$-categories} in \cite{NardinShah}, and we say a $T$-$\infty$-category has \emph{finite $T$-products} if it is right adjointable. In this case, the extension to $\Span(\mathbb F[T],\mathbb F[T],\mathbb F[T])$ obtained via Construction~\ref{constr:nardin-shah-unfurl} is called the \emph{$T$-cartesian $T$-symmetric monoidal structure} on $\Cc$ in \cite[§2.4.1]{NardinShah}. The corollary then gives a simple description of the corresponding cartesian fibration, generalizing the result for trivial $T$ previously proven in \cite[Proposition~3.1.3]{CHLL_Bispans}.\footnote{Note that while the effect of passing to opposite categories pointwise on the unstraightening of a functor can be made explicit \cite{BGN2018Dualizing}, the resulting description would be more complicated and involve iterated span categories.} In the case that $\Cc$ is given by $A\mapsto\mathbb F[T]_{/A}$ with functoriality via pullbacks, this description was conjectured in \cite[Notation~3.4.4]{CHLL_Bispans2}.
\end{example}

\section{A bestiary of unfurling constructions}
Let us summarize all the different unfurling constructions encountered throughout this paper for easy reference:
\begin{equation*}\small\renewcommand{\arraystretch}{2}
\begin{tabular}{r|c|c|}
\multicolumn{1}{l}{} & \multicolumn{1}{c}{left adjointable} & \multicolumn{1}{c}{right adjointable} \\
\cline{2-3}
covariantly     & \textbf{???} & $\Str^\text{cc}\big(\bSpan(\Un^\text{cc}F,\text{cart},\text{all})\big)$\\
\cline{2-3}
contravariantly & $\Str^\text{cc}\big(\bSpan(\Un^\text{ct}F,\text{cart},\text{all})\big)$ & $\Str^\text{ct}\big(\bSpan(\Un^\text{ct}F,\text{cart},\text{all})^\text{co}\big)$\\
\cline{2-3}
\end{tabular}
\end{equation*}
\par\vskip12pt\noindent

We do not know how to describe the extension of a covariantly left adjointable functor $F$ in a similarly nice way: note that the description $\Str^{\ct}(\bSpan(\Un^\cc F,\text{cart},\text{all})^\co)$ one obtains via pattern matching does not make sense as $\Un^\cc(F)$ need not have any cartesian edges. It does however make sense, and is the correct description, if each map $f_!\colon F(A)\to F(B)$ admits a right adjoint $f^*$, by the same argument as in \Cref{prop:subtle}. While we can again embed $F$ into some functor $\hat F$ whose structure maps have right adjoints, the resulting cartesian subfibration of $\bSpan(\Un^\cc \hat F,\text{cart},\text{all})^\co$ does not seem to have an intrinsic description in terms of $\Un^\cc F$.

\bibliography{Bibliography}
\end{document}